\newtheorem{theorem}{Theorem}[section]
\newtheorem{lemma}[theorem]{Lemma}
\newtheorem{corollary}[theorem]{Corollary}
\newtheorem{proposition}[theorem]{Proposition}
\theoremstyle{remark}
\newtheorem{remark}{Remark}
\newcommand{\R}{{\mathord{\mathbb R}}}
\newcommand{\N}{{\mathord{\mathbb N}}}
\newcommand{\di}{\,\mathrm{d}}
\newcommand{\bvec}[1]{\boldsymbol{#1}}
\title[Flat Vlasov-Poisson system with external potential]{Existence and stability of steady states solutions of Flat Vlasov-Poisson system with a central mass density}
\date{\today}
\author[M. Moreno]{Matías Moreno Bustamante}
\email{matias.bustamante.23@ucl.ac.uk}
\begin{document}

\maketitle

\begin{abstract}
   We study a Newtonian model which allows us to describe some extremely flat objects in galactic dynamics. This model is described by a partial differential equation system called Vlasov-Poisson, whose solutions describe the temporal evolution of a collisionless particle system in the phase space, subject to a self-interacting gravitational potential. We treat the Flat Vlasov-Poisson system with an external gravitational potential induced by a fixed mass density. The aim of this article is the study of the existence, regularity, and stability of steady states solutions of the Flat Vlasov-Poisson system in this case. We solved a variational problem to find minimizers for the Casimir-Energy functional in a suitable set of functions. The minimization problem is solved through a reduction of the original optimization problem with a scheme used in \cite{FirtReinI}, but instead of a concentration-compactness argument, we use a symmetrization argument to construct a spherically symmetric solution for the reduced problem. It was proven that this minimizer induces a solution for the original minimization problem. The regularity of the gravitational potential was also obtained, implying that the solutions are steady states of the Flat Vlasov-Poisson system. The minimization problem also works as a key to give us a similar non-linear stability result.
\end{abstract}

\bigskip

\noindent{\bf Acknowledgments.}  
The author received support from the Center for Mathematical Modeling through ANID/Basal projects \#ACE210010, \#FB210005, ANID/PIA \#AFB170001, and FONDECYT \#11220194.
%Thanks to \textit{cheems} and \textit{doge} for making all this possible.

%\begin{figure}[h]
    %\centering
    %\includegraphics[scale=0.08]{Antonov/doge.png}
    %\caption{Wow}
    %\label{fig:my_label}
%\end{figure}

\section{Introduction}
We study the Flat Vlasov-Poisson system with an attractive external potential. The main problem of this chapter is to search steady states solutions for this system as minimizers of Casimir-Energy functional, and study regularity and non-linear stability properties. This problem has been extensively studied (see \cite{REIN2000313,ISO-GUOREIN,https://doi.org/10.1007/s002200050674,StableSSDynamics}). In this context, as in the original case, we consider the Vlasov-Poisson system
\begin{equation}
    \label{eq:Vlasov_Poisson}
    \partial_t f + \bvec v\cdot\nabla_{\bvec x}f-\nabla_{\bvec x}U\cdot\nabla_{\bvec v} f = 0,
\end{equation}
but with the potential 
\begin{equation}
    U(t,\bvec x):= U_f(t,\bvec x) + U_{\rm ext}(\bvec x).
\end{equation}
Here $U_f$ corresponds to self-gravitational potential induced by $f$.
\begin{equation}
    \label{eq:Grav_pot}
    U_f(t,\bvec x):=-\int_{\R^2} \frac{\rho_f(t,\bvec y)}{|\bvec x-\bvec y|}\di \bvec y=-\int_{\R^2 \times \R^2} \frac{f(t,\bvec y,\bvec v)}{|\bvec x-\bvec y|}\di \bvec v \di \bvec y,
\end{equation}
where we denote $\rho=\rho_f$ the spatial density of $f$, and we write $U_f=U_{\rho}$. In particular, we study the case $U_{\rm ext}=U_{\rho_{\rm ext}}$ with $\rho_{\rm ext}\in L^1_{+}(\R^2)$. We emphasize that the potential formula in \eqref{eq:Grav_pot} differs from classic Vlasov-Poisson in $\R^2$, where the gravitational potential kernel behaives as $C\log|\bvec x|$. Flat Vlasov-Poisson system represents the idealized situation of a mass density confined to a plane in $\R^3$. Flat case with $\rho_{\rm ext}\equiv 0$ has been studied in \cite{FirtReinI} and a similar result in classic atractive Vlasov-Poisson system in dimension $3$ with $U_{\rm ext}(\bvec x)=-M/|\bvec x|$ was proved in \cite{schulze_2009}. The reduction idea comes from \cite{doi:10.1137/P0036141001389275}, and our aim is to prove that this reduction scheme still works in the case treated on this article.\\

Since we are interested in steady states solutions of \eqref{eq:Vlasov_Poisson}, both $f$ and $\rho_f$ are time independent flows, so henceforth we omit the time dependence in the notation. We define the Casimir-Energy Functional as
\begin{equation}
    \label{eq:Casimir_Energy}
    \mathcal{E}_\mathcal{C}(f)=E_{\rm kin}(f)+E_{\rm pot}(f)+\mathcal{C}(f),
\end{equation}
where the first term corresponds to kinetic energy
\begin{equation*}
    E_{\text{kin}}(f):=\int_{\R^d\times\R^d}\frac{|\bvec{v}|^2}{2}f(\bvec{x},\bvec{v})\di\bvec{v}\di\bvec{x}, 
\end{equation*}
and the second term is the potential energy associated to self-interaction and the central mass density respectively:
\begin{equation*}
    E_{\text{pot}}(f):=E_{\text{pot}}^1(f)+E_{\text{pot}}^{\varepsilon}(f),
\end{equation*}
where
\begin{equation*}
    E_{\text{pot}}(f):=-\frac{1}{2}\int_{\R^d\times\R^d}\frac{\rho_f(\bvec{x})\rho_f(\bvec{y})}{|\bvec{x}-\bvec{y}|}\di\bvec{x}\di\bvec{y}, 
\end{equation*}
and
\begin{equation*}
    E_{\text{pot}}^{\varepsilon}(f):=-\int_{\R^d\times\R^d}\frac{\rho_f(\bvec{x})\rho_\text{ext}(\bvec{y})}{|\bvec{x}-\bvec{y}|}\di\bvec{x}\di\bvec{y}.
\end{equation*}
The third term in \eqref{eq:Casimir_Energy} is usually called Casimir Functional, and it is defined as
\begin{equation}
    \label{eq:Casimir_Functional}
    \mathcal{C}(f)=\int_{\R^d\times\R^d}\Phi(f(\bvec x,\bvec v))\di\bvec v\di\bvec x
\end{equation}
for a suitable strictly convex function $\Phi\in C^1([0,\infty))$. It is easy to prove that steady states cannot minimize the classic energy of the system, expanding it around a steady state and noticing that it cannot be a critical point of the functional. The Casimir functional comes to overcome this obstacle. We define the following set of functions
\begin{equation}
    \label{eq:Feasible_set}
    \mathcal{F}_M:=\left\{f\in L^1_+(\R^4)\mid E_{\text{kin}}(f)+\mathcal{C}(f)<\infty,\int_{\R^d} f(\bvec x,\bvec v)\di\bvec{v}\di\bvec{x} = M\right\}.
\end{equation}
where $M>0$ is the mass of the system and is a constraint of the problem. Now we enunciate the main theorem proved in this paper.
\begin{theorem}
    \label{def:maintheorem1}
    Let $k\in(0,1)$ be a fixed parameter, and let $\Phi:[0,\infty)\rightarrow[0,\infty)$ be a strictly convex $C^1$ function such that $\Phi(0)=\Phi'(0)=0$, and $\Phi(x)\gtrsim x^{1+1/k}$ for large enough values of $x\geq 0$. Suppose that $\rho_{\text{ext}}\in L^{4/3}(\R^2)$ is stricly symmetric decreasing. Let $n=k+1\in (1,2)$. Then there exists some $E_0<0$ such that 
    \begin{equation}
        f_0=(\Phi')^{-1}(E_0-E)\chi_{E_0>E}
    \end{equation}
    is a minimizer of $\mathcal{E}_{\mathcal{C}}$ in $\mathcal{F}_M$. Moreover, if $\Phi\in C^{2}([0,\infty))$, $\Phi''>0$, and $\rho_{\text{ext}}\in L^{4/n}(\R^2)$, then $f_0$ is a stationary solution of Flat Vlasov-Poisson system with a central mass density. Here
    \begin{equation*}
        E(\bvec x,\bvec v)=\frac{1}{2}|\bvec v|^2+U_0(\bvec x)+U_{\rm ext}(\bvec x).
    \end{equation*}
    is the local energy per particle.
\end{theorem}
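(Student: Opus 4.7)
My strategy follows the two-stage reduction scheme of \cite{FirtReinI,doi:10.1137/P0036141001389275}, but with symmetric decreasing rearrangement replacing concentration-compactness to cope with the breaking of translation invariance caused by $U_{\rm ext}$. The first stage reduces the minimization of $\mathcal{E}_{\mathcal{C}}$ on $\mathcal{F}_M$ to the minimization of a purely spatial functional $\mathcal{H}$ on
\[
\mathcal{R}_M:=\Big\{\rho\in L^1_+(\R^2)\cap L^{1+1/n}(\R^2)\ \Big|\ \int_{\R^2}\rho\,\di\bvec{x}=M\Big\}.
\]
For fixed $\rho\in\mathcal{R}_M$, the inner problem of minimizing $E_{\rm kin}(f)+\mathcal{C}(f)$ subject to $\rho_f=\rho$ decouples pointwise in $\bvec{x}$; strict convexity of $\Phi$ and a Lagrange multiplier argument give the unique minimizer $f_\rho(\bvec{x},\bvec{v})=(\Phi')^{-1}(\lambda(\bvec{x})-|\bvec{v}|^2/2)_+$, with $\lambda(\bvec{x})$ determined by $\int f_\rho\,\di\bvec{v}=\rho(\bvec{x})$. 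A direct computation produces a strictly convex $\Psi\in C^1([0,\infty))$ with $\Psi(0)=0$ and $\Psi(s)\gtrsim s^{1+1/n}$ for large $s$, such that $E_{\rm kin}(f_\rho)+\mathcal{C}(f_\rho)=\int_{\R^2}\Psi(\rho(\bvec{x}))\,\di\bvec{x}=:\mathcal{G}(\rho)$. The reduced functional is $\mathcal{H}(\rho):=\mathcal{G}(\rho)+E_{\rm pot}^1(\rho)+E_{\rm pot}^\varepsilon(\rho)$, and $\inf_{\mathcal{F}_M}\mathcal{E}_{\mathcal{C}}=\inf_{\mathcal{R}_M}\mathcal{H}$.

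\textbf{Existence via symmetrization.} Since $|\bvec{x}|^{-1}$ is symmetric decreasing, Riesz's rearrangement inequality yields $E_{\rm pot}^1(\rho^*)\leq E_{\rm pot}^1(\rho)$, and because $\rho_{\rm ext}$ is strictly symmetric decreasing, $E_{\rm pot}^\varepsilon(\rho^*)\leq E_{\rm pot}^\varepsilon(\rho)$; equimeasurability gives $\mathcal{G}(\rho^*)=\mathcal{G}(\rho)$. I therefore choose a minimizing sequence $\{\rho_j\}$ of symmetric decreasing functions. The Hardy-Littlewood-Sobolev inequality together with $\rho_{\rm ext}\in L^{4/3}$ and mass-interpolation controls both potential terms by $\|\rho\|_{L^{1+1/n}}$ and $M$, so the growth of $\Psi$ gives boundedness from below of $\mathcal{H}$ on $\mathcal{R}_M$ and $L^{1+1/n}$-coercivity of $\{\rho_j\}$. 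Extract $\rho_j\rightharpoonup\rho_0$ weakly in $L^{1+1/n}(\R^2)$. Weak lower-semicontinuity of $\mathcal{G}$ is immediate from the convexity of $\Psi$. Symmetric decreasing functions of mass $M$ satisfy the pointwise tail bound $\rho_j(\bvec{x})\leq M/(\pi|\bvec{x}|^2)$; combined with HLS this provides equi-integrability and allows passage to the limit in the two bilinear potential terms. A trial-function computation shows $\inf_{\mathcal{R}_M}\mathcal{H}<0$, excluding the degenerate case $\rho_0\equiv 0$ and forcing $\int\rho_0=M$. Hence $\rho_0$ minimizes $\mathcal{H}$ on $\mathcal{R}_M$ and $f_0:=f_{\rho_0}$ minimizes $\mathcal{E}_{\mathcal{C}}$ on $\mathcal{F}_M$.

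\textbf{Euler-Lagrange and stationarity.} The Euler-Lagrange condition with the mass constraint yields, on $\{\rho_0>0\}$,
\[
\Psi'(\rho_0(\bvec{x}))=E_0-U_0(\bvec{x})-U_{\rm ext}(\bvec{x}),
\]
where $E_0$ is the Lagrange multiplier and $U_0:=U_{\rho_0}$. Since $\lambda(\bvec{x})=\Psi'(\rho_0(\bvec{x}))$ in the pointwise inner problem, substituting back into the formula for $f_\rho$ reconstructs $f_0=(\Phi')^{-1}(E_0-E)\chi_{E_0>E}$ with the $E$ of the statement, and $E_0<0$ follows from $\mathcal{H}(\rho_0)<0$ together with the Pohozaev-type identity obtained by pairing the Euler-Lagrange relation with $\rho_0$. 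Under the stronger hypotheses, $\rho_{\rm ext}\in L^{4/n}$ promotes $U_{\rm ext}$ to $C^1(\R^2)$ by HLS regularity; bootstrapping $\Psi'(\rho_0)=E_0-U_0-U_{\rm ext}$ yields $\rho_0\in L^{4/n}$ and hence $U_0\in C^1$, and $\Phi''>0$ makes $(\Phi')^{-1}\in C^1([0,\infty))$. Using $\nabla_{\bvec{v}}E=\bvec{v}$ and $\nabla_{\bvec{x}}E=\nabla U$ with $U=U_0+U_{\rm ext}$, a direct pointwise computation gives
\[
\bvec{v}\cdot\nabla_{\bvec{x}}f_0-\nabla_{\bvec{x}}U\cdot\nabla_{\bvec{v}}f_0=\big[(\Phi')^{-1}\big]'(E_0-E)\big(-\bvec{v}\cdot\nabla U+\nabla U\cdot\bvec{v}\big)\chi_{E_0>E}=0,
\]
with the cut-off harmless since $(\Phi')^{-1}(0)=0$, so $f_0$ solves \eqref{eq:Vlasov_Poisson} as a stationary solution.

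\textbf{Main obstacle.} The crux of the argument is the compactness step: showing that the weak-$L^{1+1/n}$ limit $\rho_0$ of a symmetric decreasing minimizing sequence preserves its full mass $M$ together with both bilinear potential energies. Coercivity in $L^{1+1/n}$ prevents concentration at the origin, but the external attraction decays at infinity and does not directly penalize mass drifting outward, so closing the argument demands combining the radial pointwise tail bound with a quantitative HLS compactness estimate to pass to the limit in $E_{\rm pot}^1$ and $E_{\rm pot}^\varepsilon$ and to exclude the dichotomy-type loss of mass that would otherwise spoil the variational identification. This is precisely the step where the symmetrization viewpoint of the paper substitutes for the concentration-compactness apparatus used in \cite{FirtReinI}.
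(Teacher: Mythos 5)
Your overall architecture matches the paper's: reduce to a spatial functional via the pointwise-in-$\bvec{x}$ inner minimization producing $\Psi$, symmetrize the minimizing sequence so that Riesz's rearrangement inequality and the symmetric decrease of $\rho_{\rm ext}$ keep it minimizing, extract a weak $L^{1+1/n}$ limit, pass to the limit in the potential energies using the radial tail bound $\rho_j(\bvec{x})\lesssim |\bvec{x}|^{-2}$, and close with the Euler--Lagrange equation and a Riesz-transform/Sobolev bootstrap for regularity. However, there is a genuine gap at the single most delicate step, which you yourself flag as ``the crux'' but never actually carry out: the claim that ``a trial-function computation shows $\inf_{\mathcal{R}_M}\mathcal{H}<0$, excluding the degenerate case $\rho_0\equiv 0$ and forcing $\int\rho_0=M$'' is not a valid inference. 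Strict negativity of the infimum only rules out the trivial limit $\rho_0\equiv 0$; it does nothing to exclude partial loss of mass, i.e.\ $0<\int\rho_0=M''<M$, which is perfectly compatible with weak convergence of a radially decreasing sequence (mass can spread to spatial infinity while the $L^{1+1/n}$ norm stays bounded). Equi-integrability plus HLS, as you invoke them, give convergence of the bilinear terms but still do not prevent this dichotomy.

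The paper closes this hole with a concrete strict-subadditivity argument that your proposal is missing: assuming $\int\rho_0=M''<M$, one truncates $\rho_0$ to $\bar\rho_R=\rho_0\chi_{B(0,R)}$ of mass $M'<M''$, rearranges it, and adds a translated bump $\varphi(\cdot-a)$ of mass $M-M'$ supported far away, chosen so that its self-energy functional (for the problem \emph{without} external potential) is below $\tfrac{1}{2}I^{0}_{M-M''}<0$; since the cross interaction terms only help, the competitor $\bar\rho_R^{\ast}+\varphi(\cdot-a)\in\mathcal{F}_M^r$ has energy strictly below $\mathcal{E}_{\mathcal{C}}^r(\rho_0)\leq I_M$, a contradiction. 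This step relies on the previously established facts $I^{0}_{m}<0$ and monotonicity of $m\mapsto I^{0}_{m}$ from the translation-invariant flat problem, neither of which appears in your write-up. Without some version of this argument your $\rho_0$ need not lie in $\mathcal{F}_M^r$, and the identification of $f_0$ as a minimizer of $\mathcal{E}_{\mathcal{C}}$ on $\mathcal{F}_M$ collapses. A secondary, smaller issue: your assertion that $\rho_{\rm ext}\in L^{4/n}$ ``promotes $U_{\rm ext}$ to $C^1$ by HLS regularity'' is too strong --- a single Riesz-transform application only gives $W^{1,4/n}$, hence H\"older continuity by Morrey, and the $C^{1}$ regularity of $U$ requires the second bootstrap step through the Euler--Lagrange relation (differentiating $\rho_0$, not $\rho_{\rm ext}$), as in the paper's Theorem \ref{teo:119}.
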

As in previous work (see \cite{doi:10.1137/P0036141001389275}, \cite{schulze_2009}, \cite{FirtReinI}) in the gravitational Vlasov-Poisson system, the aim is to construct solutions which are functions of the local energy per particle $E$ which minimize \eqref{eq:Casimir_Energy}, and this fact will be crucial to study the stability of these steady states. The problem with the Flat Vlasov-Poisson system is that even $E$ is not directly a steady state of the Vlasov equation. In our case, derivatives of gravitational potential terms induced by the solutions of the variational problem might not exist. Hence, the gravitational potential   
$U=U_0+U_{\rm ext}$ needs to be sufficiently regular in order to satisfy the equation as well.

\section{Reduced problem}
We solve a reduction of the original optimization problem. This reduction is defined over a suitable space of densities $\rho$, and we connect the solution for this reduced optimization problem with the original problem, constructing a solution using the Euler-Lagrange equation. Since we search steady state solutions of the system, we have that $f$ and $\rho_f$ are time independent flows. In order to construct a reduced problem, let
\begin{equation*}
    \mathcal{I}(g):=\int_{\R^2}\frac{|\bvec{v}|^2}{2}g(\bvec{v})+\Phi(g(\bvec{v}))\di\bvec v,
\end{equation*}
and define $\Psi:[0,\infty)\rightarrow[0,\infty)$ as follows: for each $r\geq 0$, we take the set of positive integrable functions $g$ such that $\mathcal{I}(g)<\infty$ and $g$ integrates $r$, i.e.
\begin{equation*}
    \mathcal{G}_r:=\left\{g\in L^1_+(\R^2)\mid \mathcal{I}(g)<\infty,\int g(\bvec v)\di\bvec v=r\right\}.
\end{equation*}
and define
\begin{equation}
    \label{eq:psi}
    \Psi(r):=\inf_{g\in\mathcal{G}_r}\mathcal{I}(g)
\end{equation}
The main goal is to minimize the Casimir-Energy functional over all functions $f(\bvec{x},\bvec{v})$ such that its spatial density is some fixed $\rho$, and after that, minimize it over $\rho$. Taking this into account, we set
\begin{equation}
    \mathcal{F}_M^r:=\left\{\rho\in L^{4/3}\cap L^1_+(\R^2)\mid\int\Psi(\rho(\bvec x))\di\bvec x<\infty,\int\rho(\bvec x)\di\bvec x=M\right\},
\end{equation}
and we define the reduced Casimir-Energy functional as follows
\begin{equation}
\mathcal{E}_{\mathcal{C}}^r(\rho):=\int\Psi(\rho(\bvec x))\di\bvec x+E_{\rm pot}(\rho)
\end{equation}
We state the following lemma, whose demonstration can be found in \cite{FirtReinI}. The main ideas to prove this lemma comes from the \textit{Legendre Transform} of a function $f$
\begin{lemma}
    \label{def:lema24}
    Let $\Phi\in C^1([0,+\infty))$, strictly convex, such that $\Phi(0)=\Phi'(0)=0$ and $\Psi$ be defined as in \eqref{eq:psi}, and extend both functions to $+\infty$ on $(-\infty,0)$. Then we have the following assertions: 
    \begin{enumerate}
        \item[(a)] $\Psi\in C^1([0,\infty))$, is strictly convex and $\Psi(0)=\Psi'(0)=0$. 
        \item[(b)] Let $k>0$ and $n=k+1$. Then
        \begin{enumerate}
            \item[(i)] If $\Phi(x)\simeq x^{1+1/k}$ for all $x\geq 0$, then $\Psi(x)\simeq x^{1+1/n}$ for $x\geq 0$
            \item[(ii)] If $\Phi(x)\gtrsim x^{1+1/k}$ for large enough values of $x\geq 0$, then $\Psi(x)\gtrsim x^{1+1/n}$ for large enough values of $x\geq 0$.
        \end{enumerate}
    \end{enumerate}
\end{lemma}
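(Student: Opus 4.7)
The plan is to identify the infimum defining $\Psi(r)$ by a pointwise Lagrange argument and then read off the listed properties from the resulting explicit formula. For each Lagrange multiplier $\mu\in\R$, write
$$\mathcal{I}(g) - \mu \int g \di\bvec v = \int \left[\Phi(g) + \left(\tfrac{|\bvec v|^2}{2} - \mu\right) g\right] \di\bvec v$$
and minimize the integrand pointwise over $g\geq 0$. Since $\Phi\in C^1$ is strictly convex with $\Phi'(0)=0$, its derivative $\Phi'$ is strictly increasing from $0$, and the minimum is attained uniquely at
$$g_\mu(\bvec v) = (\Phi')^{-1}\!\left((\mu - |\bvec v|^2/2)_+\right),$$
with value $-\Phi^*((\mu - |\bvec v|^2/2)_+)$, where $\Phi^*$ denotes the Legendre transform of $\Phi$; this is where the paper's remark on the Legendre transform enters. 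The map $\mu \mapsto \int g_\mu \di\bvec v$ is continuous and strictly increasing, so for every $r \geq 0$ there is a unique $\mu_r$ with $\int g_{\mu_r} \di\bvec v = r$, and one concludes
$$\Psi(r) = \mathcal{I}(g_{\mu_r}) = \mu_r r - \int \Phi^*\!\left((\mu_r - |\bvec v|^2/2)_+\right) \di\bvec v.$$

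For part (a), $\Psi(0) = 0$ is immediate. Strict convexity follows by taking convex combinations of the minimizers $g_{\mu_{r_1}}$ and $g_{\mu_{r_2}}$: when $r_1\neq r_2$ they carry different masses and so differ on a set of positive measure, whence the strict convexity of $\Phi$ yields a strict inequality through $\mathcal{I}$. Lagrange duality identifies $\Psi'(r) = \mu_r$, and combined with the continuity of $r\mapsto\mu_r$ (from the strictly monotone relation above) this gives $\Psi\in C^1([0,\infty))$. Finally $\mu_r \to 0$ as $r\to 0^+$ (otherwise $g_{\mu_r}$ would carry a positive mass bounded away from $0$), so $\Psi'(0) = 0$.

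For part (b), first handle the model case $\Phi(x) = c\, x^{1+1/k}$. Then $(\Phi')^{-1}(s) = (s/(cp))^k$ with $p = (k+1)/k$, and polar coordinates together with the substitution $u = \mu - |\bvec v|^2/2$ give
$$\int g_\mu \di\bvec v = \frac{2\pi\, \mu^n}{n(cp)^k}, \qquad \mathcal{I}(g_\mu) = c_k\, \mu^{n+1}$$
for an explicit constant $c_k > 0$. Inverting the first identity yields $\mu_r \propto r^{1/n}$, and substitution in the second gives $\Psi(r) = \tilde c_k\, r^{1+1/n}$. For the bilateral bound $c_1 x^{1+1/k} \leq \Phi(x) \leq c_2 x^{1+1/k}$, the monotonicity of the infimum with respect to $\Phi$ (a larger $\Phi$ produces a larger $\Psi$) pinches $\Psi$ between two multiples of $r^{1+1/n}$, giving (i). For (ii), where $\Phi(x) \geq c\, x^{1+1/k}$ only for $x \geq x_0$, I would split $\int\Phi(g_{\mu_r})\di\bvec v$ into the central region $\{g_{\mu_r} \geq x_0\}$ and its complement: for $r$ large enough $\mu_r$ is large, the central region dominates both mass and functional value, so the scaling argument of (i) applies there and the outer contribution is subdominant.

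The main obstacle is the quantitative bookkeeping required in (b)(ii): one must show that, as $r\to\infty$, the annulus where $0 < g_{\mu_r} < x_0$ contributes a negligible portion of $r^{1+1/n}$ to $\Psi(r)$. This reduces to a lower bound on $\mu_r$ in terms of $r$ and a comparison of the volume of that annulus with that of the central region, relying on the coercivity of $\Phi'$ at infinity implied by $\Phi(x)\gtrsim x^{1+1/k}$. Once these estimates are in place, the remaining steps are routine manipulations of the explicit pointwise minimizer $g_{\mu_r}$.
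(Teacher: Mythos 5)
Your overall strategy---identify the minimizer of $\mathcal{I}$ over $\mathcal{G}_r$ by a pointwise Lagrange/Legendre argument and read the properties of $\Psi$ off the explicit formula---is precisely the route the paper indicates (it does not reprove the lemma but defers to Firt--Rein, remarking only that the Legendre transform is the key idea), and your model-case computation and the monotonicity argument for (b)(i) are correct. One caution on part (a): you use throughout that $(\Phi')^{-1}$ is defined on all of $[0,\infty)$ and that $\mu\mapsto\int g_\mu\di\bvec v$ is surjective onto $[0,\infty)$. This is automatic under the growth hypothesis of (b), which forces $\Phi'$ to be unbounded, but part (a) carries no growth hypothesis; if $\Phi'$ is bounded the infimum defining $\Psi(r)$ need not be attained for large $r$ and the identity $\Psi(r)=\mathcal{I}(g_{\mu_r})$ fails. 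The dual formulation sidesteps attainment entirely: Young's inequality $\Phi(x)\geq sx-\Phi^\ast(s)$ applied with $s=\lambda-|\bvec v|^2/2$ gives, for every $g\in\mathcal{G}_r$ and every $\lambda\geq 0$,
\begin{equation*}
\mathcal{I}(g)\;\geq\;\lambda r-\Psi^\ast(\lambda),\qquad \Psi^\ast(\lambda):=\int_{\R^2}\Phi^\ast\Bigl(\lambda-\tfrac{|\bvec v|^2}{2}\Bigr)\di\bvec v=2\pi\int_0^\lambda\Phi^\ast(u)\di u,
\end{equation*}
with equality characterized by the very $g_\mu$ you wrote down when it exists, and all of (a) then follows from elementary properties of $\Phi^\ast$.

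The genuine gap is (b)(ii), which is the only part the paper actually invokes (Lemma \ref{def:lema29}, Corollary \ref{def:cor210}), and which you leave as a plan. The plan is more delicate than the bookkeeping you describe: under hypothesis (ii) the function $\Phi$ may grow much faster than $x^{1+1/k}$, so there is no a priori relation $\mu_r\simeq r^{1/n}$, and the mass of the annulus $\{0<g_{\mu_r}<x_0\}$ is only controlled by $2\pi x_0\mu_r$; showing this is $o(r)$ therefore requires ruling out $\mu_r\gtrsim r$ (or handling that regime separately via $\Psi'(r)=\mu_r$), which is real work. The dual bound closes (ii) in three lines: splitting the supremum in $\Phi^\ast(s)=\sup_{x\geq0}(sx-\Phi(x))$ at $x=x_0$ gives $\Phi^\ast(s)\leq C s^{k+1}+x_0 s$ for $s\geq0$, hence $\Psi^\ast(\lambda)\leq C'\lambda^{n+1}+C''\lambda^{2}\lesssim\lambda^{n+1}$ for $\lambda\geq1$ (since $n\geq1$), and choosing $\lambda\simeq r^{1/n}\geq1$ in $\Psi(r)\geq\lambda r-\Psi^\ast(\lambda)$ yields $\Psi(r)\gtrsim r^{1+1/n}$ for $r$ large. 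I recommend replacing the primal bookkeeping by this conjugate estimate; it also makes part (a) uniform in the absence of growth assumptions.
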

Previous lemma implies that $\Psi$ inherits properties from $\Phi$, so under the hypothesis of \ref{def:lema24} we can assume wlog that $\Psi\in C^1([0,\infty))$ is strictly convex, $\Psi(0)=\Psi'(0)=0$, and $\Psi(x)\gtrsim x^{1+1/n}$, with $n\in(1,2)$ fixed and large enough values of $x$. The following proposition will give sense to solve the reduced problem, connecting it with the original problem. The following proposition is a modified version of \cite[Teo 2.3]{FirtReinI} for the potential defined in \eqref{eq:Grav_pot}.

\begin{proposition}
\label{def:teo25}
Let $\rho_{ext}$ be the external spatial density. We have the following assertions: 
    \begin{enumerate}
        \item[(a)] For all functions $f\in\mathcal{F}_M$, we have that
        \begin{equation}
            \mathcal{E}_{\mathcal{C}}(f)\geq\mathcal{E}_{\mathcal{C}}^r(\rho_f)
            \label{def:th2.5a}
        \end{equation}
        with equality if $f$ is a minimizer of $\mathcal{E}_{\mathcal{C}}$ over $\mathcal{F}_M$.
        \item[(b)] Let $\rho_0$ be a minimizer of $\mathcal{E}_{\mathcal{C}}^r$ over $\mathcal{F}_M^r$ and let $U:=U_0+U_{\rm ext}$, where $U_0$ is the gravitational potential induced by $\rho_0$ and $U_{ext}$ is the gravitational potential induced by $\rho_{\rm ext}$. Suppose also that $\rho_0$ is spherically symmetric and nonincreasing. Then there exists a Lagrange multiplier $E_0<0$ such that almost everywhere we have
        \begin{equation}
            \rho_0=(\Psi')^{-1}(E_0-U)\chi_{E_0>U},
            \label{def:th2.5b1}
        \end{equation}
        and the function $f_0$ defined as 
        \begin{equation}
            f_0:=(\Phi')^{-1}(E_0-E)\chi_{E_0>E},
            \label{def:th2.5b2}
        \end{equation}
        where $E(\bvec x,\bvec v)=\frac{1}{2}|\bvec v|^2+U(\bvec x)$, is a minimizer of $\mathcal{E}_{\mathcal{C}}$ en $\mathcal{F}_M$.
    \end{enumerate}
\end{proposition}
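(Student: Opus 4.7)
The plan is to handle (a) by a slicing argument in the $\bvec v$-variable that reduces the inequality to the very definition of $\Psi$, and to handle (b) by first extracting an Euler--Lagrange equation for $\rho_0$ and then building $f_0$ from the slice-minimizers appearing in the definition of $\Psi(\rho_0(\bvec x))$.

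For (a), since $E_{\rm pot}$ depends on $f$ only through $\rho_f$, I would rewrite $E_{\rm kin}(f)+\mathcal C(f) = \int_{\R^2}\mathcal I(f(\bvec x,\cdot))\di \bvec x$ and note that each slice $f(\bvec x,\cdot)$ lies in $\mathcal G_{\rho_f(\bvec x)}$, so $\mathcal I(f(\bvec x,\cdot))\geq \Psi(\rho_f(\bvec x))$ by the definition of $\Psi$; integrating gives \eqref{def:th2.5a}. For the equality clause I would construct $\tilde f\in\mathcal F_M$ with $\rho_{\tilde f}=\rho_f$ by letting $\tilde f(\bvec x,\cdot)$ be the slice-minimizer realizing $\Psi(\rho_f(\bvec x))$. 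By the Legendre duality behind Lemma \ref{def:lema24}, this slice-minimizer has the explicit form $(\Phi')^{-1}(\lambda(\bvec x)-|\bvec v|^2/2)\chi_{\lambda(\bvec x)>|\bvec v|^2/2}$ with $\lambda(\bvec x)=\Psi'(\rho_f(\bvec x))$, so $\tilde f$ is measurable. If $f$ is a minimizer of $\mathcal E_{\mathcal C}$, the chain $\mathcal E_{\mathcal C}(f)\leq \mathcal E_{\mathcal C}(\tilde f)=\mathcal E^r_{\mathcal C}(\rho_f)\leq \mathcal E_{\mathcal C}(f)$ forces equality.

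For the Euler--Lagrange step in (b), mass-preserving perturbations $\rho_0+\varepsilon\eta$ with $\int \eta\di \bvec x=0$ (and preserving positivity where $\rho_0$ vanishes) yield $\int (\Psi'(\rho_0)+U)\eta\di \bvec x\geq 0$; the standard Lagrange multiplier argument then produces a constant $E_0\in\R$ with $\Psi'(\rho_0)+U=E_0$ a.e.\ on $\{\rho_0>0\}$ and $E_0\leq U$ a.e.\ on $\{\rho_0=0\}$. Inverting $\Psi'$, which is well-defined by Lemma \ref{def:lema24}(a), yields \eqref{def:th2.5b1}. For the sign $E_0<0$ I would rely on the fact that $U$ is continuous and vanishes at infinity (by standard estimates for the kernel $|\bvec x|^{-1}$ against $\rho_0,\rho_{\rm ext}\in L^{4/3}\cap L^1(\R^2)$), while $\rho_0\in L^1$ cannot be strictly positive throughout an exterior region; combined with the attractivity of both masses (so that $U<0$ in some neighborhood of infinity), this forces $E_0<0$.

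To finish (b), define $f_0$ by \eqref{def:th2.5b2} and observe that, for each fixed $\bvec x$, the function $\bvec v\mapsto f_0(\bvec x,\bvec v)$ is exactly the slice-minimizer realizing $\Psi(\rho_0(\bvec x))$, by the same Legendre duality applied with $\Psi'(\rho_0(\bvec x))=E_0-U(\bvec x)$; in particular $\rho_{f_0}(\bvec x)=(\Psi')^{-1}(E_0-U(\bvec x))\chi_{E_0>U(\bvec x)}=\rho_0(\bvec x)$. Consequently $\mathcal I(f_0(\bvec x,\cdot))=\Psi(\rho_0(\bvec x))$ pointwise, and adding $E_{\rm pot}(\rho_0)$ on both sides yields $\mathcal E_{\mathcal C}(f_0)=\mathcal E^r_{\mathcal C}(\rho_0)\leq\mathcal E^r_{\mathcal C}(\rho_f)\leq\mathcal E_{\mathcal C}(f)$ for every $f\in\mathcal F_M$, by the minimality of $\rho_0$ and part (a). The main obstacle is the rigorous Euler--Lagrange derivation, because admissible perturbations must simultaneously preserve mass and positivity and one must justify interchanging the derivative with the integrals defining $\Psi'(\rho_0)$ and the convolutions inside $U$; the argument that $E_0<0$ rather than just $\leq 0$ is the delicate remaining point, and is where the Flat $|\bvec x|^{-1}$ kernel (as opposed to the logarithmic 2D Newtonian kernel) makes the argument succeed by providing honest decay of $U$ at infinity.
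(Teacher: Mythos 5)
Your proposal is correct and follows essentially the same route as the paper: part (a) via the slicing identity $E_{\rm kin}(f)+\mathcal C(f)=\int\mathcal I(f(\bvec x,\cdot))\di\bvec x$ and the definition of $\Psi$, part (b) via an Euler--Lagrange perturbation yielding $\Psi'(\rho_0)=E_0-U$ on $\supp\rho_0$, and the same Legendre-duality computation showing $\rho_{f_0}=\rho_0$ and hence $\mathcal E_{\mathcal C}(f_0)=\mathcal E^r_{\mathcal C}(\rho_0)$. The ``main obstacle'' you flag (positivity-preserving perturbations) is resolved in the paper exactly by the hypothesis you did not exploit: taking $\varphi\in C_c^\infty$ supported strictly inside $\supp\rho_0$, where the spherically symmetric nonincreasing $\rho_0$ is bounded below by some $\delta>0$, so $\rho_0+\lambda\varphi\geq 0$ for $|\lambda|$ small.
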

\begin{proof}
    Proof of part (a) is quite similar as in \cite[Teo 2.3]{FirtReinI}. For part (b), let $\rho_0$ be a minimizer of $\mathcal{E}_\mathcal{C}^r$ in $\mathcal{F}_M^r$ and let
        $\varphi\in C^\infty_c(\R^2)$ such that $\text{supp}(\varphi)$ is strictly contained in $\text{supp}(\rho_0)$. If we define 
        \begin{equation*}
            \delta:=\inf_{\bvec x\in\text{supp}(\varphi)}\rho_0(\bvec x),
        \end{equation*}
        then $\delta>0$. As we have that $\rho_0$ is spherically symmetric and nonincreasing, then we can find $\lambda_\delta>0$ such that
        \begin{equation*}
            \lambda_\delta\cdot\displaystyle\sup_{\bvec x\in\text{supp}(\varphi)}|\varphi(\bvec x)|<\frac{\delta}{2}.
        \end{equation*}
        Thus we have that $\rho_0+\lambda\varphi\geq 0$ on $\text{supp}(\rho_0)$, for every $\lambda\in I_{\lambda_{\delta}}:=(-\lambda_\delta,\lambda_\delta)$. This allows us to construct a function $r:I_{\lambda_\delta}\rightarrow\R$ as
        \begin{equation*}
            \xi(\lambda)=\mathcal{E}_{\mathcal{C}}^r(\rho_0+\lambda\varphi),
        \end{equation*}
        which is well defined. Note that since $\rho_0$ is a minimizer of $\mathcal{E}_\mathcal{C}^r$, $\xi$ has a minimum in $\lambda=0$, therefore exists a Lagrange multiplier $E_0\in\R$ such that
        \begin{equation*}
            \xi'(0)=\int_{\R^2} E_0\cdot\varphi(\bvec x)\di\bvec x.
        \end{equation*}
        For the right-hand side, we compute
        \begin{equation*}
            \xi'(0)=\int_{\R^2}(\Psi'(\rho_0(\bvec x))+U(\bvec x))\varphi(\bvec x)\di\bvec x,
        \end{equation*}
        which implies that
        \begin{equation*}
            \int_{\R^2}(\Psi'(\rho_0(\bvec x))+U(\bvec x)-E_0)\varphi(\bvec x)\di\bvec x=0,
        \end{equation*}
        where $\varphi$ is an arbitrary element of $C_c^{\infty}(\R^2$) with $\text{supp}(\varphi)\subset\text{supp}(\rho_0)$. Hence we have almost everywhere on $\text{supp}(\rho_0)$ that
        \begin{equation*}
            \Psi'(\rho_0)=E_0-U,
        \end{equation*}
        and $E_0\leq U$ almost everywhere on $\R^2\setminus\text{supp}(\rho_0)$. Since $\Psi'$ is non negative and a bijective map, we have the Euler-Lagrange equation for the minimizer:
        \begin{equation*}
            \rho_0=(\Psi')^{-1}(E_0-U)\chi_{E_0>U},
        \end{equation*}
        where $U=U_0+U_{ext}$. On the other hand, we can prove that if $f_0$ is defined as in \ref{def:th2.5b2}, then $\rho_0=\rho_{f_0}$. Therefore, for an arbitrary $f\in\mathcal{F}_M$, we have that
        \begin{equation*}
            \mathcal{E}_\mathcal{C}(f)\geq\mathcal{E}_\mathcal{C}^r(\rho_f)\geq\mathcal{E}_\mathcal{C}^r(\rho_0)=\mathcal{E}_\mathcal{C}^r(\rho_{f_0})=\mathcal{E}_\mathcal{C}(f_0)
        \end{equation*}
        i.e.  $f_0$ minimizes $\mathcal{E}_\mathcal{C}$ over $\mathcal{F}_M$.  A straightforward calculation gives us that when $E_0>U$
        \begin{align*}
            \int_{\R^2} f_0(\bvec x,\bvec v)\di\bvec v&=\int_{E_0>E}(\Phi')^{-1}(E_0-E)\di\bvec v\\
            &=(\Psi^\ast)'(E_0-U)\\
            &=(\Psi')^{-1}(E_0-U)\\
            &=\rho_0,
        \end{align*}
        and both sides are zero where $E_0\leq U$, and since $U(\bvec x)\rightarrow 0$ when $|\bvec x|\rightarrow\infty$ we conclude that $E_0<0$. Here we denoted by $\Psi^\ast$ the Legendre transform of the function $\Psi$. 
\end{proof}

Proposition \ref{def:teo25} allows us to build a minimizer for the original problem through the reduced problem and writing the Euler-Lagrange equation for the reduction. Thus we just need to solve the variational reduced problem. The main idea to construct a solution resides in taking a minimizing sequence for the variational problem, and passing through subsequences, proving that it converges weakly in a suitable $L^p(\R^2)$ space. This procedure will give us a possible candidate for a minimizer.

\section{Variational approach: rearrangements}

In \cite{FirtReinI} and \cite{https://doi.org/10.1007/s002200050674}, the reduced problem for Flat Vlasov-Poisson system has been solved using a concentration compactness argument (see \cite{AIHPC_1984__1_2_109_0}) and the general ideas for this method can be reviewed in \cite{doi:10.1137/P0036141001389275}. In this paper we display a symmetrization argument, taking a minimizing sequence rearranged to obtain spherical symmetry, in order to keep the density concentrated in a finite region, avoiding the spatial translations and splittings (see \cite{lieb_loss_2001}). The following lemma explains why in the feasible set of the reduced problem, the value $p=4/3$ is useful for the following steps.

\begin{lemma}
    \label{def:lema28}
    If $\rho\in L^{4/3}(\R^2)$, then the gravitational potential $U_\rho$ is an element of $L^4(\R^2)$ which is the dual space of $L^{4/3}(\R^2)$. Moreover, the \textit{Coulomb energy} defined as
    \begin{equation}
        \label{def:coulomb}
        \mathcal{D}(\rho,\sigma):=\frac{1}{2}\iint\frac{\rho(\bvec x)\sigma(\bvec y)}{|\bvec x-\bvec y|}\di\bvec x\di\bvec y,
    \end{equation}
    is an inner product in $L^{4/3}(\R^2)$.
\end{lemma}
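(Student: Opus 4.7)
\textbf{Proof plan for Lemma \ref{def:lema28}.}

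The plan is to split the statement into three tasks: (i) integrability of the potential $U_\rho$, (ii) well-definedness, bilinearity and symmetry of $\mathcal{D}$, and (iii) positive definiteness of $\mathcal{D}$, which is where the work lies. For (i), I observe that $U_\rho(\bvec{x})=-(|\,\cdot\,|^{-1}*\rho)(\bvec{x})$ is (up to sign) a Riesz potential in $\R^2$, with kernel $|\bvec{x}|^{-\lambda}$ for $\lambda=1$ and ambient dimension $d=2$. I would apply the Hardy--Littlewood--Sobolev inequality in the form
\begin{equation*}
\bigl\| |\,\cdot\,|^{-\lambda}*\rho \bigr\|_{L^r(\R^2)} \lesssim \|\rho\|_{L^p(\R^2)}, \qquad \frac{1}{p}=\frac{1}{r}+\frac{\lambda}{d}.
\end{equation*}
With $p=4/3$ and $\lambda/d=1/2$ this yields $r=4$, giving $U_\rho\in L^4(\R^2)$ and the quantitative bound $\|U_\rho\|_4\lesssim \|\rho\|_{4/3}$. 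Note that $L^4$ is the dual of $L^{4/3}$ since $\tfrac14+\tfrac34=1$.

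For (ii), H\"older's inequality combined with the bound from (i) gives
\begin{equation*}
|\mathcal{D}(\rho,\sigma)|=\tfrac12\Bigl|\int_{\R^2}\rho(\bvec{x})(-U_\sigma(\bvec{x}))\di\bvec{x}\Bigr|\leq \tfrac12\|\rho\|_{4/3}\|U_\sigma\|_4\lesssim \|\rho\|_{4/3}\|\sigma\|_{4/3},
\end{equation*}
so $\mathcal{D}$ is a well-defined, continuous bilinear form on $L^{4/3}(\R^2)\times L^{4/3}(\R^2)$. Symmetry $\mathcal{D}(\rho,\sigma)=\mathcal{D}(\sigma,\rho)$ and bilinearity are immediate from Fubini and the symmetry of the kernel.

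The main obstacle is (iii), positive definiteness, since non-negativity of $\mathcal{D}(\rho,\rho)$ is not visible from the spatial integral when $\rho$ changes sign, and we must work with $L^{4/3}$ densities rather than Schwartz functions. The cleanest route is Fourier analytic: the kernel $|\bvec{x}|^{-1}$ in $\R^2$ is a tempered distribution whose Fourier transform is (up to a positive constant $c>0$) $|\bvec{\xi}|^{-1}$. For $\rho\in\scS(\R^2)$ a direct calculation using Parseval gives
\begin{equation*}
2\mathcal{D}(\rho,\rho)=c\int_{\R^2}\frac{|\widehat{\rho}(\bvec{\xi})|^2}{|\bvec{\xi}|}\di\bvec{\xi}\geq 0.
\end{equation*}
To transfer this to $L^{4/3}$, I would pick Schwartz approximants $\rho_n\to\rho$ in $L^{4/3}$, use the continuity of $\mathcal{D}$ from (ii) to get $\mathcal{D}(\rho_n,\rho_n)\to\mathcal{D}(\rho,\rho)$, and apply the Hausdorff--Young inequality to get $\widehat{\rho_n}\to\widehat{\rho}$ in $L^4(\R^2)$, hence (along a subsequence) pointwise a.e. Fatou's lemma then gives
\begin{equation*}
c\int_{\R^2}\frac{|\widehat{\rho}(\bvec{\xi})|^2}{|\bvec{\xi}|}\di\bvec{\xi}\leq \liminf_{n\to\infty}c\int_{\R^2}\frac{|\widehat{\rho_n}(\bvec{\xi})|^2}{|\bvec{\xi}|}\di\bvec{\xi}=2\mathcal{D}(\rho,\rho),
\end{equation*}
in particular $\mathcal{D}(\rho,\rho)\geq 0$. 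If $\mathcal{D}(\rho,\rho)=0$, the positivity of the weight $|\bvec{\xi}|^{-1}$ forces $\widehat{\rho}=0$ a.e., hence $\rho=0$. Combined with bilinearity and symmetry, this shows $\mathcal{D}$ is an inner product on $L^{4/3}(\R^2)$. The only delicate point in the argument is ensuring the Fourier identity on Schwartz class survives the approximation, which is handled by using Fatou on one side and continuity of the bilinear pairing on the other; everything else reduces to standard HLS / Hausdorff--Young bookkeeping.
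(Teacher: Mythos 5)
The paper states Lemma \ref{def:lema28} without any proof (it is quoted as a standard fact, essentially Theorem 9.8 of Lieb--Loss adapted to the kernel $|\bvec x|^{-1}$ in $\R^2$), so there is no in-paper argument to compare against; judged on its own, your proof is correct and is the standard route. The Hardy--Littlewood--Sobolev bound $\|U_\rho\|_4\lesssim\|\rho\|_{4/3}$, the resulting continuity of the bilinear form, and the Fourier positivity of the Riesz kernel extended to $L^{4/3}$ by density (continuity of $\mathcal{D}$ on one side, Hausdorff--Young plus Fatou on the other, then injectivity of the Fourier transform on $L^{4/3}$ for definiteness) all close correctly. One cosmetic caution: the exponent relation you display for HLS, $1/p = 1/r + \lambda/d$, is not the general one --- for the kernel $|\bvec x|^{-\lambda}$ the correct relation is $1/r = 1/p + \lambda/d - 1$; the two coincide here only because $\lambda = d/2$ (i.e., $\lambda=1$, $d=2$), so your conclusion $r=4$ is right but the stated identity would give the wrong exponent in any other configuration. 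Everything else, including the observation that Fatou yields only the one-sided inequality $c\int|\widehat{\rho}(\bvec\xi)|^2|\bvec\xi|^{-1}\di\bvec\xi\le 2\mathcal{D}(\rho,\rho)$ --- which is nonetheless enough for both nonnegativity and definiteness --- is sound.
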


The next step is to prove that the reduced variational problem is well defined, in the sense that the Casimir-Energy functional is bounded below over the feasible set, and therefore the infimum does not \textit{``escape''}. We have the following lemma

\begin{lemma}
    \label{def:lema29}
    Under the assumptions over the function $\Psi$ given by \ref{def:lema24}, we have that $I_M>-\infty$.
\end{lemma}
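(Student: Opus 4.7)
The plan is to show that each negative contribution to $\mathcal{E}_\mathcal{C}^r(\rho)$ is dominated by the Casimir term $T:=\int\Psi(\rho(\bvec x))\di\bvec x$, up to constants depending only on $M$ and $\rho_{\rm ext}$, so that the reduced functional is bounded below uniformly on $\mathcal{F}_M^r$.

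First I would control both potential energies by the $L^{4/3}$ norm of $\rho$. The Hardy--Littlewood--Sobolev inequality in $\R^2$ with kernel $|\bvec x-\bvec y|^{-1}$ and dual exponent $p=q=4/3$ (the Stein--Weiss condition $1/p+1/q+1/2=2$ forces exactly this value) yields
\[
|E_{\rm pot}^1(\rho)|\lesssim \|\rho\|_{4/3}^2,\qquad |E_{\rm pot}^{\varepsilon}(\rho)|\lesssim \|\rho\|_{4/3}\|\rho_{\rm ext}\|_{4/3}.
\]
Since $\rho_{\rm ext}\in L^{4/3}(\R^2)$ by hypothesis, the cross term is only linear in $\|\rho\|_{4/3}$ with a finite prefactor, and by Lemma \ref{def:lema28} both quantities are well defined.

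Next I would bound $\|\rho\|_{4/3}$ in terms of $T$ and $M$. Lemma \ref{def:lema24} provides $R_0>0$ and $c>0$ with $\Psi(r)\geq c\,r^{(n+1)/n}$ for $r\geq R_0$, so I would split $\rho=\rho\chi_{\{\rho\leq R_0\}}+\rho\chi_{\{\rho>R_0\}}$. The low-density piece contributes at most $R_0^{1/4}M^{3/4}$ in $L^{4/3}$ via $\rho^{4/3}\leq R_0^{1/3}\rho$. On the high-density piece, the Casimir bound yields $\|\rho\chi_{\{\rho>R_0\}}\|_{(n+1)/n}\lesssim T^{n/(n+1)}$, and interpolating between $L^1$ and $L^{(n+1)/n}$ with weight $\theta=(3-n)/4$ (which is legitimate since $1<4/3<(n+1)/n$ holds whenever $n<3$) gives
\[
\|\rho\|_{4/3}\leq C_1(M,R_0)+C_2(M)\,T^{n/4}.
\]

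Plugging this bound into the HLS estimates produces
\[
\mathcal{E}_\mathcal{C}^r(\rho)\;\geq\; T-A\,T^{n/2}-B\,T^{n/4}-D,
\]
with constants depending only on $M$, $R_0$, and $\|\rho_{\rm ext}\|_{4/3}$. The key observation is that the hypothesis $n\in(1,2)$ forces both $n/2<1$ and $n/4<1$, so the right-hand side is a continuous function of $T\geq 0$ which tends to $+\infty$ as $T\to\infty$ and therefore attains a finite minimum; this yields the uniform lower bound $I_M>-\infty$. The main difficulty is precisely this exponent bookkeeping: the quadratic self-interaction term is subcritical with respect to the Casimir term only because of the restriction $k<1$ (equivalently $n<2$) imposed in Theorem~\ref{def:maintheorem1}, and without this threshold the absorption argument would fail.
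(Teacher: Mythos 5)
Your proposal is correct and follows essentially the same route as the paper: Hardy--Littlewood--Sobolev for both potential terms, interpolation of the $L^{4/3}$ norm between $L^1$ and $L^{1+1/n}$, the growth condition $\Psi(x)\gtrsim x^{1+1/n}$ to absorb the $L^{1+1/n}$ norm into the Casimir term, and the observation that $n<2$ makes the negative powers subcritical. The only cosmetic difference is that you split $\rho$ into low- and high-density pieces before interpolating (which incidentally gives the slightly sharper exponent $T^{n/4}$ on the cross term), whereas the paper splits the integral $\int\rho^{1+1/n}$ and bounds everything by $T^{n/2}$; the conclusion is identical.
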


\begin{proof}
    Let $\rho\in\mathcal{F}_M^r$. We have that
    \begin{equation*}
        \mathcal{E}_\mathcal{C}^r(\rho)=\int\Psi(\rho(\bvec{x}))\di\bvec{x}+E_{pot}^1(\rho)+E_{pot}^{\varepsilon}(\rho),
    \end{equation*}
    where $E^1_{pot}$ and $E^\varepsilon_{pot}$ correspond to potential energy associated to self-interaction and the external influence, respectively. We must prove this expression is bounded from below, uniformly in $\rho$. By the Hardy-Littlewood-Sobolev inequality we obtain the bounds
    \begin{equation}
        -E_{pot}^1(\rho)\lesssim\|\rho\|_{4/3}^2,\hspace{5mm} -E_{pot}^\varepsilon(\rho)\lesssim\|\rho\|_{4/3}.
    \end{equation}
    Furthermore, by the Riesz-Thorin Lemma, and considering that $\rho\in L^1(\R^2)$, we get the bound
    \begin{equation}
        \label{def:eq4}
        \|\rho\|_{4/3}\leq\|\rho\|_{1}^{\frac{3-n}{4}}\|\rho\|_{1+1/n}^{\frac{n+1}{4}}\lesssim\|\rho\|_{1+1/n}^{\frac{n+1}{4}}.   
    \end{equation}
    On the other hand, we have that there exists $\delta>0$ such that for every $\rho>\delta$, we have $\Psi(\rho)\geq C\rho^{1+1/n}$ for some suitable constant $C>0$. Hence, if we define $\{\rho>\delta\}:=\{\bvec x\in\R^2:\rho(\bvec{x})>\delta\}$, we have the following inequality
    \begin{align}
        \int_{\R^2}\rho(\bvec x)^{1+1/n}\di\bvec x&=\int_{\{\rho>\delta\}}\rho(\bvec x)^{1+1/n}\di\bvec x+\int_{\R^2\setminus \{\rho>\delta\}}\rho(\bvec x)^{1+1/n}\di\bvec x\\
        &\leq C\int_{\R^2}\Psi(\rho(\bvec x))\di\bvec x+\delta^{1/n}\int_{\R^2}\rho(\bvec x)\di\bvec x\\
        &\lesssim\int_{\R^2}\Psi(\rho(\bvec x))\di\bvec x+1.
        \label{eq:cota323}
    \end{align}
    Therefore, by the last bound we have
    \begin{equation}
        \|\rho\|_{1+1/n}^{\frac{n+1}{4}}=\left(\int_{\R^2}\rho(\bvec x)^{1+1/n}\di\bvec x\right)^{n/4}\lesssim\left(\int_{\R^2}\Psi(\rho(\bvec x))\di\bvec x\right)^{n/2}+1,
    \end{equation}
    and moreover
    \begin{equation}
        \|\rho\|_{1+1/n}^{\frac{n+1}{2}}=\left(\int_{\R^2}\rho(\bvec x)^{1+1/n}\di\bvec{x}\right)^{n/2}\lesssim\left(\int_{\R^2}\Psi(\rho(\bvec x))\di\bvec{x}\right)^{n/2}+1.
        \label{eq:cota1112}
    \end{equation}
    Combining the inequalities above, we have the following bounds for the potential energies
    \begin{equation}
        -E_{pot}^1(\rho)\lesssim\|\rho\|_{4/3}^2\lesssim\|\rho\|_{1+1/n}^{\frac{n+1}{2}}\lesssim\left(\int_{\R^2}\Psi(\rho(\bvec x))\di\bvec x\right)^{n/2}+1,
        \label{eqn:cota1618}
    \end{equation}
    \begin{equation}
        -E_{pot}^\varepsilon(\rho)\leq C\|\rho\|_{4/3}\lesssim\|\rho\|_{1+1/n}^{\frac{n+1}{4}}\lesssim\left(\int_{\R^2}\Psi(\rho(\bvec x))\di\bvec x\right)^{n/2}+1.
        \label{eqn:cota16180}
    \end{equation}
    Hence, we have the next inequality for the reduced energy functional
    \begin{equation}
        \label{def:cota1}
        \mathcal{E}_{\mathcal{C}}^r(\rho)\geq\int_{\R^2}\Psi(\rho(\bvec x))\di\bvec x -C\left(\int_{\R^2}\Psi(\rho(\bvec x))\di\bvec x\right)^{n/2}-C,
    \end{equation}
    where $C>0$ is a suitable constant. If we consider the function $g:[0,+\infty)\rightarrow\R$ defined as $g(\bvec{x})=-Cx^{n/2}+x-C$, since $0<n<2$, it is easy to verify that $g''(\bvec{x})\geq 0$ and the equation $g'(\bvec{x})=0$ has a solution, and therefore $g$ has a global minimum over $\R^+$. Hence, we have in \eqref{def:cota1} that
    \begin{equation*}
        \mathcal{E}_{\mathcal{C}}^r(\rho)=g\left(\int_{\R^2}\Psi(\rho(\bvec x))\di\bvec x\right)\geq\inf_{x\in\R}g(\bvec{x})>-\infty,
    \end{equation*}
    
    Thus we have inmediatly that $I_M>-\infty$ as we wanted to prove.
\end{proof}

A corollary of the lemma above is the next one, which allows us to find a minimizer candidate for the reduced variational problem.

\begin{corollary}
    \label{def:cor210}
    All rearranged minimizing sequences $(\rho_i)_{i\in\N}$ in $\mathcal{F}_M^r$ of $\mathcal{E}_{\mathcal{C}}^r$ are bounded in $L^{1+1/n}(\R^2)$ and $L^{4/3}(\R^2)$. In particular, each one of them has a subsequence that converges weakly in these spaces.
\end{corollary}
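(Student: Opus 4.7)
The plan is to observe that essentially all the work has already been done inside the proof of Lemma \ref{def:lema29}, and this corollary just repackages those estimates. Let $(\rho_i)_{i\in\N}\subset\mathcal{F}_M^r$ be a rearranged minimizing sequence. Since $\mathcal{E}_\mathcal{C}^r(\rho_i)\to I_M\in\R$, we may assume $\mathcal{E}_\mathcal{C}^r(\rho_i)\leq K$ for some $K>0$ and all $i$. Writing $a_i:=\int_{\R^2}\Psi(\rho_i(\bvec x))\di\bvec x$, the inequality \eqref{def:cota1} gives
\begin{equation*}
   a_i - C a_i^{n/2} - C \;\leq\; \mathcal{E}_\mathcal{C}^r(\rho_i)\;\leq\; K,
\end{equation*}
so that $a_i \leq (K+C) + C a_i^{n/2}$. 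Since $n/2<1$, a sequence satisfying such an inequality must be bounded: if $a_{i_k}\to\infty$, then $a_{i_k}^{n/2}/a_{i_k}\to 0$, which is incompatible with the above. Hence $(a_i)$ is uniformly bounded.

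Next I would feed this into the two estimates already isolated in the proof of Lemma \ref{def:lema29}. By \eqref{eq:cota1112},
\begin{equation*}
   \|\rho_i\|_{1+1/n}^{(n+1)/2}\;\lesssim\; a_i + 1,
\end{equation*}
so $(\rho_i)$ is uniformly bounded in $L^{1+1/n}(\R^2)$. Then, using $\|\rho_i\|_1=M$ and the interpolation estimate \eqref{def:eq4},
\begin{equation*}
   \|\rho_i\|_{4/3}\;\leq\; M^{(3-n)/4}\|\rho_i\|_{1+1/n}^{(n+1)/4},
\end{equation*}
which shows $(\rho_i)$ is also uniformly bounded in $L^{4/3}(\R^2)$. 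Notice the rearrangement hypothesis is not directly used here; it only enters when one first checks that a minimizing sequence, once symmetrized, remains a minimizing sequence (because $\int\Psi(\rho)$ and $\|\rho\|_p$ are invariant under symmetric decreasing rearrangement, while the Riesz rearrangement inequality and the assumed symmetry of $\rho_{\rm ext}$ guarantee that the potential energies $E_{\rm pot}^1$ and $E_{\rm pot}^{\varepsilon}$ do not increase).

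For the weak compactness, since $n\in(1,2)$ both exponents $1+1/n\in(3/2,2)$ and $4/3$ lie in $(1,\infty)$, so $L^{1+1/n}(\R^2)$ and $L^{4/3}(\R^2)$ are reflexive. By Banach--Alaoglu I first extract a subsequence $(\rho_{i_k})$ converging weakly in $L^{4/3}(\R^2)$ to some $\rho_0$, and then extract a further subsequence converging weakly in $L^{1+1/n}(\R^2)$ to some $\tilde\rho_0$. Testing both weak limits against any $\varphi\in C_c^\infty(\R^2)$, which lies in the dual of both spaces, yields $\int\rho_0\varphi=\int\tilde\rho_0\varphi$, hence $\rho_0=\tilde\rho_0$ almost everywhere, so the extracted subsequence converges weakly in both spaces simultaneously.

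I do not anticipate any real obstacle: the whole argument is a direct recombination of the estimates already laid out. The only subtlety worth flagging is the diagonal extraction ensuring that the weak limits in the two different $L^p$-spaces agree, which is what legitimizes speaking of a single candidate minimizer in the next section.
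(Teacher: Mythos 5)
Your proposal is correct and follows essentially the same route as the paper: boundedness of $\int\Psi(\rho_i)$ from the lower bound \eqref{def:cota1} (exploiting $n/2<1$), then the estimates \eqref{eq:cota1112} and \eqref{def:eq4} to pass to $L^{1+1/n}$ and $L^{4/3}$, and Banach--Alaoglu with an identification of the two weak limits. The only cosmetic difference is that you identify the limits by testing against $C_c^\infty$ functions while the paper tests against truncated indicator functions of measurable sets; both arguments are equivalent.
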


\begin{proof}
    Since $L^{1+1/n}(\R^2)$ and $L^{4/3}(\R^2)$ are reflexive spaces, the unit ball in both spaces is weak-sequentially compact, thus by the Banach-Alaoglu Theorem, it is sufficient to prove that every minimizing sequence is bounded in these spaces. Let $(\rho_i)_{i\in\N}$ be a minimizing sequence of reduced variational problem, i.e.
    \begin{equation*}
        \lim_{i\rightarrow\infty}\mathcal{E}_{\mathcal{C}}^r(\rho_i)=I_M.
    \end{equation*}
    In particular, $\mathcal{E}_{\mathcal{C}}^r(\rho_i)$ is bounded in $\R$. By \eqref{def:cota1}, we have that
    \begin{equation}
        \mathcal{E}_{\mathcal{C}}^r(\rho_i)\geq\int_{\R^2}\Psi(\rho_i(\bvec x))\di\bvec x\left(1-C\left(\int_{\R^2}\Psi(\rho_i(\bvec x))\di\bvec x\right)^{\frac{n}{2}-1}\right)-C,
        \label{def:cota2}
    \end{equation}
    for a suitable constant $C>0$. If $\int_{\R^2}\Psi(\rho_i)\di\bvec x$ is not bounded, as $0<n<2$, the right side of \eqref{def:cota2} tends to $+\infty$ as $i\rightarrow\infty$, contradicting that $\mathcal{E}_{\mathcal{C}}^r(\rho_i)$ is bounded. Hence $\Psi(\rho_i)$ is bounded in $L^1(\R^2)$, and since
    \begin{equation*}
        \int_{\R^2}\rho_i(\bvec x)^{1+1/n}\di\bvec x\lesssim 1+\int_{\R^2}\Psi(\rho_i(\bvec x))\di\bvec x,
    \end{equation*}
    the boundedness result in $L^{1+1/n}(\R^2)$ is direct, and as $\|\rho_i\|_{4/3}\lesssim\|\rho_i\|_{1+1/n}^{\frac{n+1}{4}}$ we have that the sequence is also bounded in $L^{4/3}(\R^2)$. In particular, as rearrangements preserve the norm, we have that $\|\rho_i\|_{1+1/n}=\|\rho_i^{\ast}\|_{1+1/n}$ and $\|\rho_i\|_{4/3}=\|\rho_i^{\ast}\|_{4/3}$, and therefore we conclude that the rearranged sequence is also bounded in each spaces and thus we can extract a weakly convergent subsequence from the rearranged of initial sequence which converges in each spaces.\\ 
    
    Now, we just need to prove that the weak limit is the same. For this, let $\rho_0$ and $\rho_1$ the weak limits of $(\rho_i)_{i\in\N}$ in $L^{1+1/n}(\R^2)$ and $L^{4/3}(\R^2)$ respectively, and let $A$ some arbitrary Lebesgue measurable set. Then we have that $\rho_0\chi_{A\cap B(0,R)}$ and $\rho_1\chi_{A\cap B(0,R)}$ converge pointwise to $\rho_0\chi_{A}$ and $\rho_1\chi_{A}$ as $R\rightarrow\infty$, respectively, and both are dominated by $\rho_0$ and $\rho_1$ which are integrable. Since $\chi_{A\cap B(0,R)}$ is an element of every $L^{p}(\R^2)$ with $1\leq p\leq\infty$, by weak convergence we have that if $i\rightarrow\infty$, then
    \begin{equation*}
        \int\rho_i(\bvec{x})\chi_{A\cap B(0,R)}(\bvec{x})\di\bvec{x}\longrightarrow \int\rho_0(\bvec{x})\chi_{A\cap B(0,R)}(\bvec{x})\di\bvec{x}=\int\rho_1\chi_{A\cap B(0,R)}(\bvec{x})\di\bvec{x}
    \end{equation*}
    and taking $R\rightarrow\infty$, by the Dominated Convergence Theorem we have that
    \begin{equation*}
        \int\rho_0(\bvec{x})\chi_{A}(\bvec{x})\di\bvec{x}=\int\rho_1(\bvec{x})\chi_{A}(\bvec{x})\di\bvec{x},
    \end{equation*}
    for every Lebesgue measurable set $A$, and therefore $\rho_0-\rho_1=0$ a.e. and thus
    \begin{equation*}
        \|\rho_0-\rho_1\|_{1+1/n}=0.
    \end{equation*}
\end{proof}
The following lemma establishes that every rearranged minimizing sequence still remains this property. This is crucial for our approach.
\begin{lemma}
    \label{def:lema211}
    If $(\rho_i)_{i\in\R}$ is a minimizing sequence of $\mathcal{E}_{\mathcal{C}}^r$ in $\mathcal{F}_M^r$, then the rearranged sequence $(\rho_i^\ast)_{i\in\R}$ is also a minimizing sequence.
\end{lemma}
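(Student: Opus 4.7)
The plan is to compare, term by term, the reduced energy of $\rho_i$ with that of its symmetric decreasing rearrangement $\rho_i^\ast$, show $\mathcal{E}_{\mathcal{C}}^r(\rho_i^\ast)\le \mathcal{E}_{\mathcal{C}}^r(\rho_i)$, and then sandwich against $I_M$.

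First I would verify that each $\rho_i^\ast$ still belongs to $\mathcal{F}_M^r$. Since symmetric decreasing rearrangement preserves the distribution function, equimeasurability gives $\int F(\rho_i^\ast)\di\bvec{x}=\int F(\rho_i)\di\bvec{x}$ for every Borel $F\ge 0$ with $F(0)=0$. Applying this with $F(s)=s$, $F(s)=s^{4/3}$ and $F(s)=\Psi(s)$ (recall $\Psi(0)=0$ by Lemma \ref{def:lema24}) yields $\|\rho_i^\ast\|_1=M$, $\|\rho_i^\ast\|_{4/3}=\|\rho_i\|_{4/3}$, and $\int\Psi(\rho_i^\ast)\di\bvec{x}=\int\Psi(\rho_i)\di\bvec{x}<\infty$. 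In particular the Casimir contribution to the reduced energy is preserved exactly.

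Next I would treat the two potential terms using the Riesz rearrangement inequality. The Newtonian kernel $1/|\bvec{x}-\bvec{y}|$ is itself symmetric decreasing and equals its own rearrangement, so Riesz yields
\begin{equation*}
\iint\frac{\rho_i(\bvec{x})\rho_i(\bvec{y})}{|\bvec{x}-\bvec{y}|}\di\bvec{x}\di\bvec{y}\;\le\;\iint\frac{\rho_i^\ast(\bvec{x})\rho_i^\ast(\bvec{y})}{|\bvec{x}-\bvec{y}|}\di\bvec{x}\di\bvec{y},
\end{equation*}
and therefore, after the minus sign, $E_{\text{pot}}^1(\rho_i^\ast)\le E_{\text{pot}}^1(\rho_i)$. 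For the mixed term involving $\rho_{\text{ext}}$ I would invoke the hypothesis of Theorem \ref{def:maintheorem1} that $\rho_{\text{ext}}$ is strictly symmetric decreasing, so that $\rho_{\text{ext}}^\ast=\rho_{\text{ext}}$; a second application of Riesz then gives $E_{\text{pot}}^\varepsilon(\rho_i^\ast)\le E_{\text{pot}}^\varepsilon(\rho_i)$. Summing the three contributions produces $\mathcal{E}_{\mathcal{C}}^r(\rho_i^\ast)\le \mathcal{E}_{\mathcal{C}}^r(\rho_i)$.

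Finally, since $\rho_i^\ast\in\mathcal{F}_M^r$ I have $I_M\le \mathcal{E}_{\mathcal{C}}^r(\rho_i^\ast)\le \mathcal{E}_{\mathcal{C}}^r(\rho_i)$, and the outer terms tend to $I_M$, so $\mathcal{E}_{\mathcal{C}}^r(\rho_i^\ast)\to I_M$ by the squeeze theorem. The argument is essentially bookkeeping once the right classical inequality (Riesz) is in hand; the only substantive input is the symmetry hypothesis on $\rho_{\text{ext}}$ from Theorem \ref{def:maintheorem1}, without which the cross term could \emph{increase} under rearrangement of $\rho_i$ and the lemma would fail.
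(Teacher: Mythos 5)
Your proof is correct and follows essentially the same route as the paper: verify $\rho_i^\ast\in\mathcal{F}_M^r$, apply the Riesz rearrangement inequality to both potential terms (using that $\rho_{\text{ext}}=\rho_{\text{ext}}^\ast$ by hypothesis), and squeeze $I_M\le\mathcal{E}_{\mathcal{C}}^r(\rho_i^\ast)\le\mathcal{E}_{\mathcal{C}}^r(\rho_i)\to I_M$. The only cosmetic difference is that you get exact equality $\int\Psi(\rho_i^\ast)\di\bvec{x}=\int\Psi(\rho_i)\di\bvec{x}$ from equimeasurability, whereas the paper invokes only the (weaker but sufficient) nonexpansivity inequality.
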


\begin{proof}
    Since the rearrangement preserves the norm, we have
    \begin{equation}
        \int\rho_i^{\ast}(\bvec{x})\di\bvec{x}=M,
        \label{def:eq2}
    \end{equation}
    and as $\Psi$ is nonnegative and convex such that $\Psi(0)=0$, by the nonnexpansivity of rearrangment, we have that
    \begin{equation}
        \int\Psi(\rho_i^{*}(\bvec{x}))\di\bvec{x}\leq\int\Psi(\rho_i(\bvec{x}))\di\bvec{x}<\infty.
        \label{def:iq2}
    \end{equation}
    By \eqref{def:eq2} and \eqref{def:iq2} we have that $\rho_i^{\ast}\in\mathcal{F}_M^r$, for all $i\in\N$, and thus
    \begin{equation*}
        I_M\leq\mathcal{E}_{\mathcal{C}}^r(\rho_i^{\ast}).
    \end{equation*}
    By the Riesz Rearrangement Inequality, we have the bound
    \begin{equation*}
        E_{pot}^1(\rho_i)=-\frac{1}{2}\iint\frac{\rho_i(\bvec{x})\rho_i(\bvec{y})}{|\bvec{x}-\bvec{y}|}\di\bvec{x}\di\bvec{y}\geq-\frac{1}{2}\iint\frac{\rho_i^{\ast}(\bvec{x})\rho_i^{\ast}(\bvec{y})}{|\bvec{x}-\bvec{y}|}\di\bvec{x}\di\bvec{y}=E_{pot}^{1}(\rho_i^\ast).
    \end{equation*}
    Moreover, as $\rho_{ext}$ is strictly symmetric decreasing, by the simplest rearrangement inequality and as $\|\rho_{ext}\|_2=\|\rho^\ast_{ext}\|_2$, we have that $\rho_{ext}=\rho_{ext}^\ast$ and therefore
    \begin{equation*}
        -E_{pot}^\varepsilon(\rho_i)=\iint\frac{\rho_{ext}(\bvec{x})\rho_i(\bvec{y})}{|\bvec{x}-\bvec{y}|}d\bvec{x}d\bvec{y}=\int -U_{ext}(\bvec{x})\rho_i^{\ast}(\bvec{x})d\bvec{x}=-E_{pot}^\varepsilon(\rho_i^\ast),
    \end{equation*}
    and hence
    \begin{equation*}
        E_{pot}^{\varepsilon}(\rho_i^{\ast})+ E_{pot}^1(\rho_i^{\ast})\leq E_{pot}^{\varepsilon}(\rho_i)+E_{pot}^1(\rho_i).
    \end{equation*}
    which implies that
    \begin{equation*}
        \lim_{i\rightarrow\infty}\mathcal{E}_\mathcal{C}(\rho_i^{\ast})=I_M
    \end{equation*}
    as desired.
\end{proof}

\begin{remark}
    Based on Corollary \ref{def:cor210} and Lemma \ref{def:lema211}, we assume without loss of generality that the minimizing sequence also is spherically symmetric and nonincreasing. As $(\rho_i)_{i\in\N}$ is weakly-sequentially compact, passing through subsequences we can assume from now that there is some $\rho_0$ such that $\rho_i\rightharpoonup\rho_0$ on $L^{1+1/n}(\R^2)$ and $L^{4/3}(\R^2)$. 
\end{remark}

Although the weak convergence usually might not give us enough information about the weak limit $\rho_0$, in this case we have the following result, which establishes that the weak convergence implies strong convergence of potential energies.

\begin{lemma}
    \label{def:lema212}
    Let $(\rho_i)_{i\in\N}$ be the rearranged minimizing sequence. Then 
    \begin{equation}
        E_{pot}(\rho_i)\rightarrow E_{pot}(\rho_0),
    \end{equation}
    where $\rho_0$ is the weak limit in $L^{1+1/n}(\R^2)$.
\end{lemma}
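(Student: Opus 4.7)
The plan is to split $E_{\text{pot}}(\rho)=E_{\text{pot}}^1(\rho)+E_{\text{pot}}^\varepsilon(\rho)$ and handle each contribution separately. The external term is linear: writing $E_{\text{pot}}^\varepsilon(\rho)=\int_{\R^2}\rho(\bvec x)U_{\text{ext}}(\bvec x)\di\bvec x$, Lemma \ref{def:lema28} applied to $\rho_{\text{ext}}\in L^{4/3}(\R^2)$ yields $U_{\text{ext}}\in L^4(\R^2)$, the dual of $L^{4/3}(\R^2)$. Combined with the weak convergence $\rho_i\rightharpoonup\rho_0$ in $L^{4/3}(\R^2)$ from Corollary \ref{def:cor210} and the subsequent remark, this gives $E_{\text{pot}}^\varepsilon(\rho_i)\to E_{\text{pot}}^\varepsilon(\rho_0)$ immediately.

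The self-interaction term $E_{\text{pot}}^1(\rho)=-\mathcal{D}(\rho,\rho)$ is quadratic and requires genuine work, since weak convergence does not transfer through a quadratic form. My plan is to produce an almost everywhere pointwise limit exploiting the radial decreasing structure, identify it with $\rho_0$, and then apply the dominated convergence theorem. By the spherical symmetry and monotonicity, together with the uniform bounds $\|\rho_i\|_1=M$ and $\|\rho_i\|_{1+1/n}\leq C$ (Corollary \ref{def:cor210}), integrating over $B(0,|\bvec x|)$ gives the uniform pointwise decay
\begin{equation*}
\rho_i(\bvec x)\leq g(\bvec x):=C\min\bigl(|\bvec x|^{-2},\,|\bvec x|^{-2n/(n+1)}\bigr).
\end{equation*}
A direct computation (splitting at $|\bvec x|=1$) shows $g\in L^{4/3}(\R^2)$: the origin singularity is integrable because $n<2<3$, and the $|\bvec x|^{-8/3}$ tail is integrable at infinity in $\R^2$. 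By Lemma \ref{def:lema28} (Hardy--Littlewood--Sobolev), $\iint_{\R^2\times\R^2}\frac{g(\bvec x)g(\bvec y)}{|\bvec x-\bvec y|}\di\bvec x\di\bvec y<\infty$.

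Each $\rho_i$ is a monotone function of $r=|\bvec x|$, so Helly's selection theorem (combined with a diagonal argument over the radial variable) produces a subsequence $\rho_{i_k}$ converging almost everywhere to some radial nonincreasing $\tilde\rho$. The uniform domination by $g\in L^1_{\text{loc}}(\R^2)$ and dominated convergence on every ball give $\rho_{i_k}\to\tilde\rho$ strongly in $L^1_{\text{loc}}(\R^2)$, hence weakly in $L^{4/3}_{\text{loc}}(\R^2)$. Comparing with the global weak $L^{4/3}$ limit $\rho_0$ forces $\tilde\rho=\rho_0$ almost everywhere. Applying dominated convergence to the Coulomb integral --- the integrand $\frac{\rho_{i_k}(\bvec x)\rho_{i_k}(\bvec y)}{|\bvec x-\bvec y|}$ converges a.e.\ to $\frac{\rho_0(\bvec x)\rho_0(\bvec y)}{|\bvec x-\bvec y|}$ and is dominated by the integrable $\frac{g(\bvec x)g(\bvec y)}{|\bvec x-\bvec y|}$ --- yields $\mathcal{D}(\rho_{i_k},\rho_{i_k})\to\mathcal{D}(\rho_0,\rho_0)$, that is, $E_{\text{pot}}^1(\rho_{i_k})\to E_{\text{pot}}^1(\rho_0)$. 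The limit being unique, a standard subsequence-of-subsequence argument promotes the convergence to the full sequence.

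The main obstacle is precisely the quadratic self-interaction: weak $L^{4/3}$ convergence does not pass through $\mathcal{D}(\cdot,\cdot)$ on its own. The radial symmetric decreasing structure is essential, as it simultaneously provides a pointwise $L^{4/3}$ dominator $g$ and a pointwise a.e.\ convergent subsequence via Helly; verifying that $g$ lies in $L^{4/3}(\R^2)$ is precisely what pins down the admissible range $n\in(1,2)$.
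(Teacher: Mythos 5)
Your proposal is correct, but it takes a genuinely different route from the paper for the quadratic term. The paper writes $\sigma_i:=\rho_i-\rho_0\rightharpoonup 0$, expands $E_{\text{pot}}^1(\rho_i)-E_{\text{pot}}^1(\rho_0)=E_{\text{pot}}^1(\sigma_i)+2\mathcal{D}(\sigma_i,\rho_0)$, kills the cross term by weak convergence, and then estimates $E_{\text{pot}}^1(\sigma_i)$ by splitting the double integral into three regions: a near-diagonal strip $|\bvec x-\bvec y|<R_1$ (small by a H\"older/Young estimate since $1/|\cdot|\in L^{(n+1)/2}_{\text{loc}}$), a far-field region $|\bvec x|\geq R_2$ or $|\bvec y|\geq R_2$ (small via the same radial decay $\rho_i(\bvec x)\lesssim|\bvec x|^{-2}$ that you use), and a compact middle region handled by weak convergence plus dominated convergence of an auxiliary family $h_i$. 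You instead exploit the symmetric-decreasing structure more aggressively: the two-scale pointwise dominator $g=C\min(|\bvec x|^{-2},|\bvec x|^{-2n/(n+1)})\in L^{4/3}(\R^2)$ (your exponent checks both at the origin, where $8n/(3(n+1))<2$ for $n<3$, and at infinity), Helly's selection theorem to upgrade weak convergence to a.e.\ convergence along a subsequence, identification of the a.e.\ limit with $\rho_0$ through $L^1_{\text{loc}}$ convergence, and then a single dominated-convergence pass over the full Coulomb integral, finished by the subsequence-of-subsequences principle. Your version is shorter and avoids the three-region bookkeeping, at the price of relying entirely on radial monotonicity (both for the dominator and for Helly); the paper's decomposition isolates radiality to the far-field tail only and is the form that would survive in a concentration-compactness setting without symmetrization. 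Both arguments use the same two external inputs (Hardy--Littlewood--Sobolev and the uniform bounds of Corollary \ref{def:cor210}), and your treatment of the linear external term coincides with the paper's.
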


\begin{proof}
    Since $U_{ext}=U_{\rho_{ext}}$ with $\rho_{ext}\in L^{4/3}(\R^2)$, by Lemma \ref{def:lema28} we have that $U_{ext}\in L^{4}(\R^2)=(L^{4/3}(\R^2))^{\ast}$ and by weak convergence in $L^{4/3}(\R^2)$ (see \ref{def:cor210}) we have that $E_{pot}^{\varepsilon}(\rho_i)\rightarrow E_{pot}^{\varepsilon}(\rho_0)$. It is sufficient to show that
    \begin{equation}
        E_{pot}^1(\rho_i)\rightarrow E_{pot}^1(\rho_0).
    \end{equation}
    Define $\sigma_i:=\rho_i-\rho_0\rightharpoonup 0$ in $L^{1+1/n}(\R^2)$ and note that if $\mathcal{D}$ is the Coulomb energy defined in \ref{def:coulomb}, then
    \begin{align*}
        E_{pot}^1(\sigma_i)=E_{pot}^1(\rho_i)-E_{pot}^1(\rho_0)-2\mathcal{D}(\sigma_i,\rho_0).
    \end{align*}
    By weak convergence we have that $\mathcal{D}(\sigma_i,\rho_0)\rightarrow 0$, it is enough to prove that $E_{pot}^1(\sigma_i)\rightarrow 0$. Let $R_1>0$ and split the integral in $E_{pot}(\sigma_i)$ in two parts, one of them inside, and the other outside of the strip $|x-y|<R_1$
    \begin{align*}
        \iint\frac{\sigma_i(\bvec x)\sigma_i(\bvec y)}{|\bvec x-\bvec y|}\di\bvec x\di\bvec y&=\iint_{|\bvec x-\bvec y|<R_1}\frac{\sigma_i(\bvec x)\sigma_i(\bvec y)}{|\bvec x-\bvec y|}\di\bvec x\di\bvec y\\
        &+\iint_{|\bvec x-\bvec y|\geq R_1}\frac{\sigma_i(\bvec x)\sigma_i(\bvec y)}{|\bvec x-\bvec y|}\di\bvec x\di\bvec y.
    \end{align*}
    We denote by these two integrals $I_1$, $I_2$, and we will try to find small bounds for them. First, as $\sigma_i\in L^{1+1/n}(\R^2)$ and $1/|\cdot|\in L^{(n+1)/2}(B(0,R_1))$, using Hölder's inequality, Young's inequality, and the fact that $(\sigma_i)_{i\in\N}$ is bounded in $L^{1+1/n}(\R^2)$, we bound the first integral as follows
    \begin{equation*}
        I_1\lesssim\left\|\frac{\chi_{B(0,R_1)}}{|\cdot|}\right\|_{(n+1)/2}.
    \end{equation*}
    Therefore, we note that
    \begin{equation*}
        \left\|\frac{\chi_{B(0,R_1)}}{|\cdot|}\right\|_{(n+1)/2}=\left(\int^{2\pi}_{0}\int_{0}^{R_1}\frac{1}{r^{(n+1)/2}}r\di r\di\theta\right)^{2/(n+1)}=CR_1^{(3-n)/2},
    \end{equation*}
    for a suitable constant $C>0$. Here the last term vanishes when $R_1\rightarrow 0$ since $0<n<2$, and thus for $R_1$ small enough, $I_1<\epsilon$, for an arbitrary $\epsilon>0$. For the second integral, we define 
    \begin{equation*}
        U_{R_2}=\{(\bvec x,\bvec y)\in\R^4:|\bvec x|\geq R_2\vee|\bvec y|\geq R_2\}.
    \end{equation*}
    and we split the second integral inside and outside the set $U_{R_2}$ 
    \begin{equation*}
        I_2=\iint_{|\bvec x-\bvec y|\geq R_1\cap U_{R_2}}\frac{\sigma_i(\bvec x)\sigma_i(\bvec y)}{|\bvec x-\bvec y|}\di\bvec x\di\bvec y+\iint_{|\bvec x-\bvec y|\geq R_1\cap U_{R_2}^c}\frac{\sigma_i(\bvec x)\sigma_i(\bvec y)}{|\bvec x-\bvec y|}\di\bvec x\di\bvec y.
    \end{equation*}
    If we call these two integrals by $I_{2,1}$ and $I_{2,2}$, respectively, by Hardy-Littlewood-Sobolev we have the bound 
    \begin{equation*}
        |I_{2,1}|\lesssim\|\sigma_i\chi_{B(0,R_2)^c}\|_{4/3}\|\sigma_i\|_{4/3}.
    \end{equation*}
    As in the proof of Lemma \ref{def:lema29}, we have that $\|\sigma_i\|_{4/3}\lesssim\|\sigma_i\|_{1+1/n}^{\frac{n+1}{4}}$ and as $\sigma_i$ converges weakly to $0$ in $L^{1+1/n}(\R^2)$ and thus is bounded in that space. Then this inequality implies that $\sigma_i$ is also bounded in $L^{4/3}(\R^2)$. Hence, by Minkowski's inequality we have
    \begin{equation*}
        |I_{2,1}|\lesssim\|\rho_i\chi_{B(0,R_2)^c}\|_{4/3}+\|\rho_0\chi_{B(0,R_2)^c}\|_{4/3}.
    \end{equation*}
    As $\rho_i$ is spherically symmetric and nonincreasing, pointwise on $\bvec x\in\R^2$, the function $\rho_i$ is dominated by the average over a ball centered in the origin and radius $|\bvec x|$, i.e. 
    \begin{equation*}
        \rho_i(\bvec x)\leq\frac{1}{|B(0,1)|\cdot |\bvec x|^2}\int_{B(0,|\bvec x|)}\rho_i(y)\di\bvec y\lesssim\frac{1}{|\bvec x|^2}.
    \end{equation*}
    Therefore
    \begin{align*}
        \|\rho_i\chi_{B(0,R_2)^c}\|_{4/3}&\lesssim\left(\int_{B(0,R_2)^c}\frac{1}{|\bvec x|^{8/3}}\di\bvec x\right)^{3/4}\lesssim R_2^{-\frac{1}{2}}.
    \end{align*}
    On the other hand, in the same way as before
    \begin{equation*}
        \|\rho_0\chi_{B(0,R_2)^c}\|_{4/3}\lesssim\|\rho_0\chi_{B(0,R_2)^c}\|_{1+1/n}^{\frac{n+1}{4}},    
    \end{equation*}
    and as $\rho_i\rightharpoonup\rho_0$ in $L^{1+1/n}(\R^2)$, then by the weakly lower semicontinuity of the norm, we have that
    \begin{equation*}
        \|\rho_0\chi_{B(0,R_2)^c}\|_{1+1/n}\leq\liminf_{i\rightarrow\infty}\|\rho_i\chi_{B(0,R_2)^c}\|_{1+1/n}.
    \end{equation*}
    A similar brief calculation gives us the bound
    \begin{align*}
        \|\rho_i\chi_{B(0,R_2)^c}\|_{1+1/n}\lesssim R_2^{-\frac{2}{n+1}},
    \end{align*}
    and therefore
    \begin{equation*}
        \|\rho_0\chi_{B(0,R_2)^c}\|_{4/3}\lesssim\|\rho_0\chi_{B(0,R_2)^c}\|_{1+1/n}^{\frac{n+1}{4}}\lesssim R_2^{-\frac{1}{2}}.
    \end{equation*}
    Hence, we have that
    \begin{equation*}
        |I_{2,1}|\lesssim R_2^{-\frac{1}{2}}\rightarrow 0,
    \end{equation*}
    when $R_2\rightarrow\infty$. Finally, we have that
    \begin{align*}
        I_{2,2}&=\iint_{|\bvec x-\bvec y|\geq R_1}\frac{\sigma_i(\bvec x)\sigma_i(\bvec y)}{|\bvec x-\bvec y|}\chi_{B(0,R_2)}(\bvec x)\chi_{B(0,R_2)}(y)\di\bvec x\di\bvec y\\
        &=\int\sigma_i(\bvec x)h_i(\bvec x)\di\bvec x,
    \end{align*}
    where we defined the function $h_i$ as follows 
    \begin{equation*}
        h_i(\bvec x):=\chi_{B(0,R_2)}(\bvec x)\int_{|\bvec x-\bvec y|\geq R_1}\frac{\sigma_i(\bvec y)}{|\bvec x-\bvec y|}\chi_{B(0,R_2)}(\bvec y)\di\bvec y.
    \end{equation*}
    Now, for every $\bvec x\in\R^2$ we denote by $\varphi_{\bvec x}$ the function defined as
    \begin{equation*}
        \varphi_{\bvec x}(y):=\frac{\chi_{B(0,R_2)}(\bvec y)}{|\bvec x-\bvec y|}\chi_{\R^2\setminus B(0,R_1)}(\bvec x-\bvec y),
    \end{equation*}
    in order to write
    \begin{equation*}
        h_i(\bvec x)=\chi_{B(0,R_2)}(\bvec x)\int\sigma_i(\bvec y)\varphi_{\bvec x}(\bvec y)\di\bvec y.
    \end{equation*}
    We prove that $\varphi_{\bvec x}\in L^{n+1}(\R^2)$. Indeed, we have that 
    \begin{equation*}
        \|\varphi_{\bvec x}\|_{n+1}^{n+1}=\int\frac{\chi_{B(0,R_2)}(\bvec x)}{|\bvec x-\bvec y|^{n+1}}\chi_{\R^2\setminus B(0,R_1)}(\bvec x-\bvec y)\di\bvec y\leq\frac{1}{R_1^{n+1}}\mathcal|B(0,R_2)|<\infty.
    \end{equation*}
    Thus, as $\sigma_i\rightharpoonup 0$ in $L^{1+1/n}(\R^2)$, we have that $h_i(\bvec x)\rightarrow 0$, for all $\bvec x\in\R^2$, and as $|h_i(\bvec x)|\lesssim\chi_{B(0,R_2)}(\bvec x)\in L^{n+1}(\R^2)$, then by the Dominated Convergence Theorem, we have that $h_i\rightarrow 0$ en $L^{n+1}(\R^2)$. Then, by the Hölder's inequality we have that 
    \begin{equation*}
        I_{2,2}\leq\|\sigma_i\|_{1+1/n}\|h_i\|_{n+1}\lesssim\|h_i\|_{n+1}<\epsilon,
    \end{equation*}
    for all large enough values of $i$. Therefore, when $R_2\rightarrow\infty$, for small enoguh values of $R_1$ and large enough values of $i$, we have 
    \begin{equation*}
        \iint\frac{\sigma_i(\bvec x)\sigma_i(\bvec y)}{|\bvec x-\bvec y|}\di\bvec x\di\bvec y=I_1+I_{2,1}+I_{2,2}<2\epsilon,
    \end{equation*}
    which implies that $E_{pot}^1(\sigma_i)\rightarrow 0$. Thus
    \begin{equation*}
        E_{pot}^1(\rho_i)-E_{pot}^1(\rho_0)=E_{pot}^1(\sigma_i)+2\mathcal{D}(\sigma_i,\rho_0)\rightarrow 0,
    \end{equation*}
    as desired.\par
\end{proof}
Convergence of potential energies proved above, gives us a strong evidence that the weak limit $\rho_0$ in $L^{1+1/n}(\R^2)$ of the rearranged minimizing sequence is a good candidate to be a minimizer for the functional $\mathcal{E}_{\mathcal{C}}^r$. We just need to prove that $\mathcal{E}_\mathcal{C}^r(\rho_0)$ is almost the infimum over the feasible space and also that $\rho_0$ is an element of this set. We are ready to prove the first part of Theorem \ref{def:maintheorem1}.

\begin{proof}[Proof of Theorem \ref{def:maintheorem1}, first part]
Let $(\rho_i)_{i\in\N}\subseteq\mathcal{F}_M^r$ be the rearranged minimizing sequence for $\mathcal{E}_{\mathcal{C}}^r$. By Corollary \ref{def:cor210}, there exists $\rho_0$ such that $\rho_i\rightharpoonup\rho_0$ in $L^{1+1/n}(\R^2)$. For $R>0$ we have that $\chi_{B(0,R)}\in L^{n+1}(\R^2)$, therefore by weak convergence we have that
    \begin{equation*}
        M=\int\rho_i(\bvec{x})\di\bvec{x}\geq\int_{B(0,R)}\rho_i(\bvec{x})\di\bvec{x}\longrightarrow\int_{B(0,R)}\rho_0(\bvec{x})\di\bvec{x}
    \end{equation*}
    thus, when $R\rightarrow\infty$, we have that
    \begin{equation*}
        \int\rho_0(\bvec{x})\di\bvec{x}\leq M,
    \end{equation*}
     and interpolating in the same way as in \eqref{def:eq4}, and along with the above, we can prove that $\rho_0\in L^1_+(\R^2)\cap L^{4/3}(\R^2)$, and by Lemma \ref{def:lema212} we have that $E_{pot}(\rho_i)\rightarrow E_{pot}(\rho_0)$. Using Mazur's Lemma, we can find a family of nonnegative and finite sequences $(B_n)_{n\in\N}$, where $B_n=(\alpha_k^n)_{k=n}^{N_n}$ such that 
    \begin{equation*}
        \sum_{k=n}^{N_n}\alpha_k^n=1,
    \end{equation*}
    and the sequence $(\hat{\rho}_n)_{n\in\N}$ defined as 
    \begin{equation*}
        \hat{\rho}_n=\sum_{k=n}^{N_n}\alpha_k^n\rho_k,
    \end{equation*}
    converges strongly to $\rho_0$ in $L^{1+1/n}(\R^2)$. Thus, there is a subsequence  $(\hat{\rho}_{n_j})_{j\in\N}$ which converges pointwise almost everywhere to $\rho_0$. Since the map $\rho\mapsto\Psi(\rho)$ is strictly convex, we have that the map $\rho\mapsto\int\Psi(\rho)d\bvec{x}$ is convex, and therefore
    \begin{equation*}
        \int\Psi(\hat{\rho}_{n_j})\di\bvec{x}=\sum_{k=n_j}^{N_{n_j}}\alpha_k^{n_j}\int\Psi(\rho_k)\di\bvec{x}\leq\sup_{k\geq n_j}\int\Psi(\rho_k)\di\bvec{x}.
    \end{equation*}
    By Fatou's Lemma and using the fact that $\Psi\in C^1([0,\infty))$, we have that
    \begin{align*}
        \int\Psi(\rho_0)\di\bvec{x}=\int\liminf_{j\rightarrow\infty}{\Psi(\hat{\rho}_{n_j})}\di\bvec{x}\leq \limsup_{k\rightarrow\infty}{\int\Psi(\rho_{k})}\di\bvec{x}.
    \end{align*}
    As $E_{pot}(\rho_0)=\displaystyle\limsup_{i\rightarrow\infty}{E_{pot}(\rho_i)}$, and $(\rho_i)_{i\in\N}$ is a minimizing sequence of $\mathcal{E}_{\mathcal{C}}^r$ in $\mathcal{F}_M^r$, we have that
    \begin{equation*}
        \mathcal{E}_{\mathcal{C}}^r(\rho_0)\leq\limsup_{i\rightarrow\infty}{\int\Psi(\rho_{i})}\di\bvec{x}+\limsup_{i\rightarrow\infty}{E_{pot}(\rho_i)}=I_M.
    \end{equation*}
    Hence $\mathcal{E}_{\mathcal{C}}^r(\rho_0)\leq I_M$ and thus we only must prove that $\rho_0$ is an element of the feasible set, i.e. $\rho_0\in\mathcal{F}_M^r$. For this, if $\int\Psi(\rho_0(\bvec{x}))\di\bvec{x}$ is not bounded, the inequality in \eqref{def:cota1} implies that
    \begin{equation*}
        I_M=\infty,
    \end{equation*}
    which is a contradiction. Thus, it is enough to prove that
    \begin{equation*}
        \int\rho_0(\bvec{x})\di\bvec{x}=M.    
    \end{equation*}
    We will proceed by contradiction. Assume otherwise that
    \begin{equation*}
       0< M''=\int\rho_0(\bvec{x})\di\bvec{x}<M,
    \end{equation*}
    and let's consider $\Bar{\rho}_R:=\rho_0\chi_{B(0,R)}$, where $R>0$ is such that 
    \begin{equation}
        \label{def:eq3}
        M':=\int\Bar{\rho}_R(\bvec{x})\di\bvec{x}<M''<M.
    \end{equation}
    Define $\mathcal{E}_{\mathcal{C}}^{r,0}$ and $I_M^0$ as the Casimir-Energy reduced functional and the infimum of the minimization reduced problem respectively, in the case of Flat Vlasov-Poisson without external influence. Then
    \begin{equation*}
        I_M^{0}:=\inf_{\rho\in\mathcal{F}_M^r}\mathcal{E}_{\mathcal{C}}^{r,0}(\rho),
    \end{equation*}
    where
    \begin{equation*}
        \mathcal{E}_{\mathcal{C}}^{r,0}(\rho)=\int\Psi(\rho(\bvec{x}))\di\bvec{x}+E_{pot}^1(\rho)
    \end{equation*}
    In \cite[Lema 3.4. (a)]{FirtReinI} it was proved that $I_M^{0}<0$ for every $M>0$, and the assertion \cite[Lema 3.4. (b)]{FirtReinI} of this lemma implies that the map $M\mapsto I_M^0$ is nonincreasing in $M$. Since $\Bar{\rho}_R(\bvec{x})\rightarrow\rho_0(\bvec{x})$ pointwise when $R\rightarrow\infty$ and is a function dominated by $\rho_0\in L^{1+1/n}(\R^2)$, the Dominated Convergence Theorem implies that $\Bar{\rho}_R\rightarrow\rho_0$ in $L^{1+1/n}(\R^2)$ and in particular $\Bar{\rho}_R\rightharpoonup\rho_0$ in this space. As $\Psi$ is strictly convex and $\Psi'$ is a bijection over $[0,\infty)$, then is strictly increasing and thus
    \begin{equation*}
        \int\Psi(\Bar{\rho}_R(\bvec{x}))\di\bvec{x}\leq\int\Psi(\rho_0(\bvec{x}))\di\bvec{x}.
    \end{equation*}
    In a similar way as before, if we rearrange $\Bar{\rho}_R$, as $\Psi$ is convex and $\Psi(0)=0$, by nonexpansivity of rearrangements together with the inequality above we have that
    \begin{equation*}
        \int\Psi(\Bar{\rho}^\ast_R(\bvec{x}))\di\bvec{x}\leq\int\Psi(\rho_0(\bvec{x}))\di\bvec{x}.
    \end{equation*}
    Hence, by Lemma \ref{def:lema212}, we have that $E_{pot}(\Bar{\rho}_R^\ast)\rightarrow E_{pot}(\rho_0)$. Let $\epsilon>0$ such that
    \begin{equation}
        \label{eq:c1}
        I_{M-M''}^{0}<-\epsilon
    \end{equation}
    and take $R$ large enough such that $E_{pot}(\Bar{\rho}_R^\ast)-E_{pot}(\rho_0)\leq\frac{\varepsilon}{2}$. Then
    \begin{align}
        \mathcal{E}_{\mathcal{C}}^r(\Bar{\rho}_R^\ast)=\mathcal{E}_{\mathcal{C}}^r(\rho_0)+\frac{\epsilon}{2}.\label{eq:c2}
    \end{align}
    By \eqref{def:eq3}, we have that $\delta:=M-M'\geq M-M''>0$ and we can take $\varphi\in\mathcal{F}_{\delta}^r$ such that $supp(\varphi)\subseteq B(0,R')$ with $R'>0$ and
    \begin{equation}
        \label{eq:c3}
        \mathcal{E}_{\mathcal{C}}^{r,0}(\varphi)<\frac{I_\delta^{0}}{2}\leq\frac{I_{M-M''}^{0}}{2}.
    \end{equation}
    In this way, if $a\in\R^2$ is such that $|a|=R+R'$, then
    \begin{equation}
        \label{eq:c4}
        \int\Psi(\Bar{\rho}_R^\ast(\bvec{x})+\varphi(\bvec{x}-a))\di\bvec{x}=\int\Psi(\Bar{\rho}_R^\ast(\bvec{x}))\di\bvec{x}+\int\Psi(\varphi(\bvec{x}))\di\bvec{x}.
    \end{equation}
    Otherwise
    \begin{equation}
        \label{eq:c5}
        -\mathcal{D}(\Bar{\rho}_R^\ast+\varphi(\cdot-a),\Bar{\rho}_R^\ast+\varphi(\cdot-a))\leq-\mathcal{D}(\Bar{\rho}_R^\ast,\Bar{\rho}_R^\ast)-\mathcal{D}(\varphi,\varphi),
    \end{equation}
    and
    \begin{equation}
        \label{eq:c6}
        -\mathcal{D}(\rho_{ext},\Bar{\rho}_R^\ast+\varphi(\cdot-a))\leq-\mathcal{D}(\rho_{ext},\Bar{\rho}_R^\ast).
    \end{equation}
    Joining all the expressions in \eqref{eq:c1}, \eqref{eq:c2}, \eqref{eq:c3}, \eqref{eq:c4}, \eqref{eq:c5} and \eqref{eq:c6}, we are able to bound as follows
    \begin{align*}
        \mathcal{E}_{\mathcal{C}}^r(\Bar{\rho}_R^\ast+\varphi(\cdot-a))&\leq\mathcal{E}_{\mathcal{C}}^r(\Bar{\rho}_R^\ast)+\mathcal{E}_{\mathcal{C}}^{r,0}(\varphi)\\
        &<\mathcal{E}_{\mathcal{C}}^r(\rho_0)+\frac{\epsilon}{2}+\frac{I_{M-M''}^0}{2}\\
        &<\mathcal{E}_{\mathcal{C}}^r(\rho_0)\\
        &=I_{M}.
    \end{align*}
    As the rearrangement preserves the $L^1(\R^2)$ norm, then $\Bar{\rho}_R^\ast\in\mathcal{F}_{M'}^r$ and $\varphi\in\mathcal{F}_{\delta}^r$, then $\Bar{\rho}_R^\ast+\varphi(\cdot-a)\in\mathcal{F}_M^r$, which is a contradiction. Hence
    \begin{equation*}
        \int\rho_0(\bvec{x})\di\bvec{x}=M,   
    \end{equation*}
     and therefore $\rho_0\in\mathcal{F}_M^r$, which implies that $\rho_0$ is a minimizer of $\mathcal{E}_{\mathcal{C}}^r$ over the feasible set, i.e. a solution of the reduced variational problem. Hence, by Theorem \ref{def:teo25}, we have that the function $f_0$ which satisfies the following identity
    \begin{equation*}
        f_0=(\Phi')^{-1}(E_0-E)\chi_{E_0>E},
    \end{equation*}
    is a minimizer of the Casimir-Energy functional.
\end{proof}

\begin{remark} 

Note that the rearrangement argument also works for the original case in Flat Vlasov-Poisson system, setting $\rho_{ext}\equiv0\in L^{4/3}(\R^2)$. As in the proof of Lemma \ref{def:lema211}, we can prove that if $\rho_0$ is the solution of the reduced variational problem, the symmetric rearrangement of $\rho_0$ is also a solution. Hence, we can assume that $\rho_0$ is spherically symmetric and nonincreasing. When we proved that $\rho_0\in\mathcal{F}_M^r$, we assumed that $\rho_0$ is zero almost everywhere, but this assertion is true. Indeed, as $I_M^0<0$, then there exist $\rho\in\mathcal{F}_M^r$ such that $\mathcal{E}_\mathcal{C}^{r,0}(\rho)<0$, then
\begin{equation*}
    \mathcal{E}_\mathcal{C}^r(\rho_0)\leq\mathcal{E}_\mathcal{C}^{r,0}(\rho)+E_{pot}^\varepsilon(\rho)<0,
\end{equation*}
thus $\rho_0$ cannot be zero almost everywhere. 
\end{remark}

\section{Regularity of gravitational potential}

As it was mentioned along the text, the idea is to construct stationary solutions which are functions of the energy $E$. The problem in Flat Vlasov-Poisson system is that even $E$ is not directly a steady state of the Vlasov equation. That is, in order to $f=E$ be a solution of 
\begin{equation}
    \label{eq:vlasovsdystate}
    \bvec{v}\cdot\nabla_{\bvec{x}} f - \nabla_{\bvec{x}} U\cdot\nabla_{\bvec{v}} f = 0,
\end{equation}
$U=U_0+U_{ext}$ needs to be sufficiently regular to make sense on the second term in the left side in \eqref{eq:vlasovsdystate}. This is obviously true in Vlasov-Poisson in $\R^3$ considering that in this case we are looking for solutions in which the gravitational potential is a solution of Poisson's equation $\Delta U=4\pi\rho$, so there exist at least two derivatives of $U$. In the flat case the regularity cannot be assure easily, because the potential is not constructed as solutions of Poisson's equation in dimension $d=2$, but it is imposed as the convolution in $\R^2$ of the spatial density and $-1/|\cdot|$. In this section this issue is aborded and solved. First of all, we have the following result.

\begin{lemma}
    \label{def:contpot}
    Assume that $\rho_{ext}\in L^{q}(\R^2)$, where $q=\frac{p}{p-1}$, $1<p<2$. Let $\rho_0$ the solution of the reduced variational problem. Then $U_{0}$ and therefore $U$ are continuous.
\end{lemma}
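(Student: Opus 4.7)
My plan is to establish continuity of $U_{\text{ext}}$ and of $U_0$ separately; additivity then yields continuity of $U=U_0+U_{\text{ext}}$. The common tool is a splitting of the Newtonian kernel $K(\bvec z)=-1/|\bvec z|$ as $K=K_1+K_2$ with
\[
K_1(\bvec z)=-\frac{\chi_{\{|\bvec z|<1\}}}{|\bvec z|},\qquad K_2(\bvec z)=-\frac{\chi_{\{|\bvec z|\geq 1\}}}{|\bvec z|}.
\]
A polar-coordinate computation shows $K_1\in L^r(\R^2)$ iff $r<2$, while $|K_2|\leq 1$, so $K_2\in L^\infty$ (and in $L^r$ for every $r>2$).

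For $U_{\text{ext}}$ the argument is direct: the pairing $\rho_{\text{ext}}\in L^q$, $K_1\in L^p$ with $p=q/(q-1)<2$ puts the convolution $\rho_{\text{ext}}*K_1$ into the range of the standard translation-continuity argument (convolution of $L^s$ with $L^{s'}$, $s<\infty$, is uniformly continuous). And $\rho_{\text{ext}}\in L^1$ together with $K_2\in L^\infty$ gives $\rho_{\text{ext}}*K_2$ continuous by translation continuity in $L^1$. Hence $U_{\text{ext}}$ is continuous.

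For $U_0$ the obstacle is that the natural integrability of the minimizer, $\rho_0\in L^1\cap L^{4/3}\cap L^{1+1/n}$, only provides exponents $<2$, so pairing $\rho_0$ directly against $K_1$ in the above way fails. I plan to overcome this with a one-step bootstrap based on the Euler--Lagrange equation of Proposition~\ref{def:teo25}:
\[
\rho_0=(\Psi')^{-1}(E_0-U)\,\chi_{\{U<E_0\}}.
\]
Lemma~\ref{def:lema28} (equivalently, Hardy--Littlewood--Sobolev applied to $\rho_0,\rho_{\text{ext}}\in L^{4/3}$) gives $U_0,U_{\text{ext}},U\in L^4(\R^2)$. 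The growth hypothesis $\Psi(x)\gtrsim x^{1+1/n}$ combined with strict convexity and $\Psi(0)=0$ yields $\Psi'(x)\geq \Psi(x)/x\gtrsim x^{1/n}$ for large $x$, and therefore $(\Psi')^{-1}(s)\lesssim 1+s^n$ for all $s\geq 0$. Inserting this into the Euler--Lagrange relation gives the pointwise bound
\[
\rho_0(\bvec x)\lesssim \bigl(1+|U(\bvec x)|^n\bigr)\chi_{\{U<E_0\}}(\bvec x),
\]
and since $E_0<0$, Chebyshev's inequality on $U\in L^4$ controls the support: $|\{U<E_0\}|\leq \|U\|_4^4/|E_0|^4<\infty$. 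Raising to the $(4/n)$-th power and using $|U|^n\in L^{4/n}$ together with the finite-measure bound yields $\rho_0\in L^{4/n}(\R^2)$, with $4/n>2$ because $n<2$.

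With this upgraded integrability I repeat the splitting argument for $U_0=\rho_0*K_1+\rho_0*K_2$: now $\rho_0\in L^{4/n}$ pairs conjugately with $K_1\in L^{4/(4-n)}$ (exponent $<2$), and $\rho_0\in L^1$ with $K_2\in L^\infty$, so both pieces are continuous. This finishes $U_0$, and hence $U$. The main difficulty I anticipate is the bootstrap paragraph: the finite-measure property of $\{U<E_0\}$ is what makes $\rho_0\in L^{4/n}$ hold globally rather than only locally, and it uses both the negativity of $E_0$ (coming from Proposition~\ref{def:teo25}) and the $L^4$ control from HLS, so these structural ingredients must be cited carefully. The condition $n<2$ is precisely what makes the critical threshold $r=2$ crossable in a single step; any weaker growth on $\Psi$ would force iteration or a different method.
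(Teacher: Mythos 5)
Your proposal is correct and follows essentially the same route as the paper: both use the Euler--Lagrange relation together with $\Psi'(x)\gtrsim x^{1/n}$ to bootstrap $\rho_0$ into $L^{4/n}(\R^2)$ with $4/n>2$, then split the kernel $1/|\cdot|$ into a near piece and a far piece and conclude continuity from conjugate $L^p$ pairings. The only (immaterial) differences are that you control the low-density contribution via Chebyshev on the finite-measure set $\{U<E_0\}$ where the paper simply uses $\rho_0\in L^1$ on $\{\rho_0<\delta\}$, and you pair the far piece with $L^1\times L^\infty$ where the paper uses $L^{4/3}\times L^4$.
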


\begin{proof}
    Since $\rho_0,\rho_{ext}\in L^{4/3}(\R^2)$, $U=U_0+U_{ext}\in L^4(\R^2)$, and by the equation $\Psi'(\rho_0)=E_0-U$ a.e. on $\text{supp}(\rho_0)=\{E_0-U>0\}$ we conclude that $\Psi'(\rho_0)\in L^4(\R^2)$. Recall that 
    \begin{equation*}
        \Psi(x)\gtrsim x^{1+1/n},
    \end{equation*}
    for large values of $x\geq 0$. Thus, by the convexity of $\Psi$ we have that
    \begin{equation*}
        \Psi'(\rho)\gtrsim\rho^{1/n},
    \end{equation*}
    where the last inequality holds for all $\rho\geq\delta$, for some $\delta\geq 0$ fixed. Thus we have that
    \begin{align*}
        \int\rho_0(\bvec{x})^{4/n}\di\bvec{x}&=\int_{\{\rho_0\geq\delta\}}\rho_0(\bvec{x})^{4/n}\di\bvec{x}+\int_{\{\rho_0<\delta\}}\rho_0(\bvec{x})^{4/n}\di\bvec{x}\\
        &\lesssim\int_{\{\rho_0\geq\delta\}}\Psi'(\rho_0(\bvec{x}))^4\di\bvec{x}+\int_{\{\rho_0<\delta\}}\rho_0(\bvec{x})^{4/n}\di\bvec{x}\\
        &\lesssim\int\Psi'(\rho_0(\bvec{x}))^4\di\bvec{x}+1,
    \end{align*}
    and therefore $\rho_0\in L^{4/n}(\R^2)$. It is easy to prove that if $n\in(1,2)$, then $1/|\cdot|\chi_{B(0,R)}\in L^{4/n}(\R^2)^\ast=L^{\frac{4}{4-n}}(\R^2)$ and $1/|\cdot|\chi_{B(0,R)^c}\in L^{4/3}(\R^2)^\ast= L^4(\R^2)$ for every $R>0$. Hence
    \begin{equation*}
        -U_0=\frac{1}{|\cdot|}\ast\rho_0=\frac{1}{|\cdot|}\chi_{B(0,R)}\ast\rho_0+\frac{1}{|\cdot|}\chi_{B(0,R)^c}\ast\rho_0,
    \end{equation*}
    is continuous. Finally, as $\rho_{ext}\in L^{4/3}(\R^2)\cap L^q(\R^2)$, we have that
    \begin{equation*}
        -U_{ext}=\frac{1}{|\cdot|}\ast\rho_{ext}=\frac{1}{|\cdot|}\chi_{B(0,R)}\ast\rho_{ext}+\frac{1}{|\cdot|}\chi_{B(0,R)^c}\ast\rho_{ext},
    \end{equation*}
    and the argument is the same as above, just note that $1/|\cdot|\chi_{B(0,R)}\in L^{p}(\R^2)$.
\end{proof}

It is clear that the result proved above is still insufficient. On the other hand, for the solution of the variational problem
\begin{equation*}
    f_0=(\Phi')^{-1}(E_0-E)\chi_{E_0>E},
\end{equation*}
we also need more regularity for $\Phi$. In particular, we need to differentiate the inverse of the first derivative. For this, it is enough to have $\Phi\in C^2([0,\infty))$ and $\Phi''>0$, and thus, if we could prove more regularity for the potential $U$, then $f_0$ would be a steady state solution of the Flat Vlasov-Poisson with the external potential. Another way to investigate the regularity of the potential comes from to the study of its Fourier Transform $\mathcal{F}$. Note that if $d=2$, then
\begin{equation}
    \mathcal{F}(U)\simeq\mathcal{F}(\rho_{0}+\rho_\varepsilon)\cdot\frac{1}{|\cdot|}.
\end{equation}
Since $\rho_0\in L^{4/n}(\R^2)\cap L^1(\R^2)$, interpolating we conclude that $\rho_0\in L^2(\R^2)$ and therefore by Plancherel's Theorem, we have that $\mathcal{F}(\rho_0)\in L^2(\R^2)$. If $\rho_{ext}\in L^{2}(\R^2)$, as before we have $\mathcal{F}(\rho_{ext})$ and hence $\mathcal{F}(\rho_{0,\varepsilon})\in L^2(\R^2)$. Then we conclude that the map $|\cdot|\mathcal{F}(U)\in L^2(\R^2)$, and in particular we have that $(1+|\cdot|)\mathcal{F}(U)\in L^2_{loc}(\R^2)$, which implies that $U\in H^1_{loc}(\R^2)$. This is not enough to embed this space to reach more regularity. The main idea to overcome this problem comes from the Riesz Transform (see \cite{SPS_1988__22__485_0}), and more general, from the following result.

\begin{theorem}
    \label{def:teo117}
    The operator $T$ defined as
    \begin{equation}
        Tf:=-\frac{1}{|\cdot|}\ast f,
    \end{equation}
    maps elements of $W^{k,p}(\R^2)$ into $W^{k+1,p}(\R^2)$.
\end{theorem}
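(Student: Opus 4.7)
The strategy is to view $T$ as (a constant multiple of) the Riesz potential of order $1$ on $\R^2$, whose Fourier multiplier is $c|\xi|^{-1}$, and reduce the statement to the $L^p$-boundedness of the Riesz transforms. The key analytic input is that differentiating the convolution kernel $-1/|\cdot|$ produces a Calder\'on-Zygmund kernel: formally,
\begin{equation*}
\partial_i\left(-\frac{1}{|x|}\right)=\frac{x_i}{|x|^3},
\end{equation*}
which scales like $|x|^{-2}$ in $\R^2$ and has zero mean on spheres, so the associated principal-value operator $\partial_i T$ coincides, up to a constant, with the $i$-th Riesz transform $R_i$ (Fourier symbol $i\xi_i/|\xi|$). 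Classical Calder\'on-Zygmund theory then supplies $\|R_i g\|_{L^p(\R^2)}\lesssim\|g\|_{L^p(\R^2)}$ for every $1<p<\infty$.

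First I would reduce to the base case $k=0$ by commuting weak derivatives with the convolution. For any multi-index $\alpha$ with $|\alpha|\le k$ one has $\partial^\alpha(Tf)=T(\partial^\alpha f)$ in the distributional sense, and $\partial^\alpha f\in L^p$ because $f\in W^{k,p}$. Hence an estimate $\|Tg\|_{W^{1,p}}\lesssim\|g\|_{L^p}$ for $g\in L^p$, summed over $|\alpha|\le k$, yields the full $W^{k+1,p}$ bound.

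For the base case itself, the gradient part is immediate from the Riesz transform identity above: $\|\nabla Tg\|_{L^p}=c\,\|(R_1 g,R_2 g)\|_{L^p}\lesssim\|g\|_{L^p}$. For $Tg$ itself in $L^p$, I would split the kernel as $-1/|x|=K_1+K_2$ with $K_1=-|x|^{-1}\chi_{B(0,1)}$ and $K_2=-|x|^{-1}\chi_{B(0,1)^c}$. Since the singularity $|x|^{-1}$ is integrable near the origin in dimension two, $K_1\in L^1(\R^2)$ and Young's inequality controls $K_1\ast g$ in $L^p$. The tail $K_2$ lies in $L^r(\R^2)$ for every $r>2$ but not in $L^1$; to handle it one invokes additional integrability of $g$ (e.g.\ $g\in L^1\cap L^p$) combined with Young with mixed exponents. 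This is precisely the setting in which the theorem will be applied to $\rho_0$ and $\rho_{\mathrm{ext}}$, via the integrability established in Lemma \ref{def:contpot}.

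The main obstacle is exactly this last step: the Riesz potential of order $1$ does not map $L^p$ into $L^p$, but only $L^p$ into $L^q$ with $1/q=1/p-1/2$ by Hardy-Littlewood-Sobolev. Consequently the cleanest form of the theorem lives in the homogeneous Sobolev scale $\dot{W}^{k+1,p}$, where only the top-order derivatives are quantified and the Riesz transform estimate suffices with no extra hypotheses; the inhomogeneous statement as written must be read together with an auxiliary $L^p$ bound on $Tf$ coming from the extra integrability of $f$. Once both ingredients are in place, the iteration through the commutation $\partial^\alpha T = T\partial^\alpha$ closes the argument.
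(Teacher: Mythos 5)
Your core argument is the same as the paper's: identify $\partial_j T$ with (a constant multiple of) the $j$-th Riesz transform through the Fourier symbols $i\xi_j\cdot c|\xi|^{-1}\simeq i\xi_j/|\xi|$, commute $T$ with the weak derivatives, and invoke Calder\'on--Zygmund theory to get $\|D^\beta\partial_j Tf\|_p\lesssim\|D^\beta f\|_p$ for $|\beta|=k$. Where you go beyond the paper is in your final paragraph, and the caveat you raise there is correct and substantive: the paper's proof only ever estimates the derivatives of order exactly $k+1$ and never verifies that $Tf$ and its derivatives of order $\le k$ lie in $L^p$, which is required for membership in the inhomogeneous space $W^{k+1,p}(\R^2)$ and which fails for general $f\in L^p$ --- the order-one Riesz potential maps $L^p(\R^2)$ only into $L^q$ with $1/q=1/p-1/2$ (and only for $1<p<2$, whereas the exponent $p=4/n\in(2,4)$ used in Theorem \ref{teo:119} lies outside that range entirely). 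So the theorem as written is really a homogeneous-Sobolev statement, and the inhomogeneous version needs the extra integrability you identify. Your splitting $-1/|x|=K_1+K_2$ with $K_1\in L^1$ near the origin and $K_2\in L^r(\R^2)$ for every $r>2$ at infinity, combined with the fact that $\rho_0$ and $\rho_{\rm ext}$ lie in $L^1\cap L^{4/n}$, does close this gap in the only place the theorem is applied; the one step worth writing out explicitly is the Young estimate $\|K_2\ast g\|_{4/n}\le\|K_2\|_{4/n}\|g\|_1$, which is legitimate precisely because $4/n>2$. In short, your proposal reproduces the paper's argument for the top-order derivatives and, in addition, correctly diagnoses (and sketches the repair of) an omission in the paper's own proof.
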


\begin{proof}
    If $f\in W^{k,p}(\R^2)$, then we need to prove that $D^\alpha(Tf)\in L^p(\R^2)$ for every multiindex $\alpha$ with $|\alpha|=k+1$. Note that $D^\alpha(Tf)=D^\beta(\partial_j(Tf))$ where $\beta$ is a multiindex with $|\beta|=k$ and $D^\beta f\in L^p(\R^2)$. Then
    \begin{equation*}
        \mathcal{F}(R_j(D^\beta f))(\bvec{x})=-i\frac{x_j}{|x|}\mathcal{F}(D^\beta f)(\bvec{x}).
    \end{equation*}
    Thus, if we denote by $R_j$ the Riesz Transform in the coordinate $j\in\{1,2\}$, we have
    \begin{align*}
        \mathcal{F}(R_j(D^\beta f))(\bvec{x})\simeq\mathcal{F}(D^{\beta}(\partial_j Tf))(\bvec{x}).
    \end{align*}
    And then
    \begin{equation*}
        \|D^\alpha(Tf)\|_p=\|D^{\beta}(\partial_j Tf)\|_p\lesssim\|D^\beta f\|_p,
    \end{equation*}
    which implies the desires result.
\end{proof}

For our main problem, we need the following lemma.

\begin{lemma}
    \label{def:contpsi}
    If $\Phi\in C^2([0,\infty))$ and $\Phi''>0$ in $[0,\infty)$, then $(\Psi')^{-1}\in C^1([0,\infty))$.
\end{lemma}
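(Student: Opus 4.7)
The plan is to derive the explicit representation
\begin{equation*}
(\Psi')^{-1}(\lambda) \;=\; 2\pi \int_{0}^{\lambda} (\Phi')^{-1}(u)\, du, \qquad \lambda \geq 0,
\end{equation*}
and then read off $C^1$ regularity via the Fundamental Theorem of Calculus. The identity is the one that already appears, in disguise, inside the proof of Proposition \ref{def:teo25}, where the chain $(\Psi^\ast)'(E_0-U) = (\Psi')^{-1}(E_0-U) = \int (\Phi')^{-1}(E_0-E)\chi_{E_0>E}\, d\bvec v$ is used.

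First I would justify the identity for all $\lambda \geq 0$. For fixed $\lambda$, the function $g_\lambda(\bvec v) := (\Phi')^{-1}(\lambda - |\bvec v|^2/2)\,\chi_{|\bvec v|^2/2 < \lambda}$ is the unique minimizer of $\mathcal{I}$ subject to $\int g = r_\lambda$, where $r_\lambda := \int g_\lambda\, d\bvec v$; indeed, since $\Phi$ is strictly convex, the Euler--Lagrange condition $\Phi'(g(\bvec v)) + |\bvec v|^2/2 = \lambda$ on $\{g>0\}$ has the unique nonnegative solution $g_\lambda$, and any admissible competitor with the same mass has strictly larger value of $\mathcal{I}$ by Jensen-type convexity. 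This is the content of the Legendre-transform viewpoint mentioned in the paragraph preceding Lemma \ref{def:lema24}. Hence $\Psi(r_\lambda) = \mathcal{I}(g_\lambda)$ and the Lagrange multiplier equals $\Psi'(r_\lambda) = \lambda$, so $r_\lambda = (\Psi')^{-1}(\lambda)$.

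Next I would compute $r_\lambda$ by passing to polar coordinates in $\R^2$ and performing the substitution $u = \lambda - s^2/2$, $du = -s\, ds$:
\begin{equation*}
(\Psi')^{-1}(\lambda) \;=\; r_\lambda \;=\; 2\pi \int_{0}^{\sqrt{2\lambda}} (\Phi')^{-1}\!\left(\lambda - \tfrac{s^2}{2}\right) s\, ds \;=\; 2\pi \int_{0}^{\lambda} (\Phi')^{-1}(u)\, du.
\end{equation*}
Under the standing growth assumption $\Phi(x) \gtrsim x^{1+1/k}$ with $k \in (0,1)$, so $1+1/k > 2$, convexity forces $\Phi'(x) \to \infty$ as $x \to \infty$; together with $\Phi'(0)=0$ and $\Phi'' > 0$, the map $\Phi': [0,\infty) \to [0,\infty)$ is a $C^1$ strictly increasing bijection, so $(\Phi')^{-1}$ is continuous on $[0,\infty)$. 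Applying the Fundamental Theorem of Calculus to the integral representation above then yields
\begin{equation*}
\bigl((\Psi')^{-1}\bigr)'(\lambda) \;=\; 2\pi \, (\Phi')^{-1}(\lambda), \qquad \lambda \geq 0,
\end{equation*}
which is continuous; hence $(\Psi')^{-1} \in C^1([0,\infty))$.

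The main obstacle is the first step, namely the rigorous justification of the identity $(\Psi')^{-1}(\lambda) = \int (\Phi')^{-1}(\lambda - |\bvec v|^2/2)_+\, d\bvec v$ for every $\lambda \geq 0$: one must produce an admissible minimizer $g_\lambda \in \mathcal{G}_{r_\lambda}$ of $\mathcal{I}$, verify that the formal Lagrange multiplier $\lambda$ genuinely equals $\Psi'(r_\lambda)$, and check that $g_\lambda$ does realize the infimum (not merely a stationary point). Once this Legendre-transform identity is in place, both the explicit formula and the $C^1$ regularity are immediate by calculus. The remaining regularity steps are routine; strict positivity of $\Phi''$ even gives $(\Phi')^{-1} \in C^1$, hence $(\Psi')^{-1} \in C^2$, but $C^1$ is all that is needed.
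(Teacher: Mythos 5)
Your proposal is correct and rests on the same mechanism as the paper's proof: the Legendre-duality identity $\Psi^\ast(\lambda)=\int_{|\bvec v|^2/2<\lambda}\Phi^\ast\bigl(\lambda-\tfrac{|\bvec v|^2}{2}\bigr)\di\bvec v$ combined with $(\Psi')^{-1}=(\Psi^\ast)'$ and differentiation under the integral. The paper writes $\Psi^\ast$ as the two-dimensional integral and differentiates twice, landing on $((\Psi')^{-1})'(\lambda)\simeq\int_0^{\sqrt{2\lambda}} r\,\Phi''\bigl((\Phi')^{-1}(\lambda-\tfrac{r^2}{2})\bigr)^{-1}\di r$; after the substitution $u=\lambda-r^2/2$ this is exactly your $2\pi(\Phi')^{-1}(\lambda)$ up to constants, so the two computations agree. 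Two remarks on the differences. First, your arrangement (reduce to $(\Psi')^{-1}(\lambda)=2\pi\int_0^\lambda(\Phi')^{-1}(u)\di u$ and apply the Fundamental Theorem of Calculus) makes it visible that $\Phi''>0$ is not actually needed for the $C^1$ conclusion --- continuity of $(\Phi')^{-1}$ suffices --- and that under the stated hypotheses one in fact gets $(\Psi')^{-1}\in C^2$; the paper's version needs $\Phi''>0$ already to write down its integrand. Second, the step you flag as the main obstacle (identifying $r_\lambda=\int(\Phi')^{-1}(\lambda-\tfrac{|\bvec v|^2}{2})_+\di\bvec v$ with $(\Psi')^{-1}(\lambda)$ via an Euler--Lagrange/Lagrange-multiplier argument for the minimizer $g_\lambda$ of $\mathcal{I}$ over $\mathcal{G}_{r_\lambda}$) can be bypassed entirely: computing $\Psi^\ast(\lambda)=\sup_{g\geq 0}\int\bigl[(\lambda-\tfrac{|\bvec v|^2}{2})g(\bvec v)-\Phi(g(\bvec v))\bigr]\di\bvec v=\int\Phi^\ast\bigl(\lambda-\tfrac{|\bvec v|^2}{2}\bigr)\di\bvec v$ by taking the supremum pointwise in $\bvec v$ gives the identity directly, and this is the Legendre-transform route the paper takes (via Lemma \ref{def:lema24} and the reference there). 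With that substitution your argument closes without the variational detour.
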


\begin{proof}
    Note that since $\Phi(r)=+\infty$ on $(-\infty,0)$, for $\lambda>0$
    \begin{equation*}
        \Psi^\ast(\lambda)=\int_{|\bvec{v}|<\sqrt{2\lambda}}\Phi^\ast\left(\lambda-\frac{|\bvec{v}|^2}{2}\right)\di\bvec{v}.
    \end{equation*}
    Recall that
    \begin{equation*}
        \frac{d}{d\lambda}[(\Psi')^{-1}](\lambda)=\frac{d^2}{d\lambda^2}[\Psi^\ast](\lambda)=\frac{d^2}{d\lambda^2}\int_{|\bvec{v}|<\sqrt{2\lambda}}\Phi^\ast\left(\lambda-\frac{|\bvec{v}|^2}{2}\right)\di\bvec{v}.
    \end{equation*}
    Since
    \begin{align*}
        \frac{d}{d\lambda}\int_{|\bvec{v}|<\sqrt{2\lambda}}\Phi^\ast\left(\lambda-\frac{|\bvec{v}|^2}{2}\right)\di\bvec{v}\simeq\int^{\sqrt{2\lambda}}_0(\Phi')^{-1}\left(\lambda-\frac{r^2}{2}\right)r\di r,
    \end{align*}
    we have
    \begin{align*}
        \frac{d}{d\lambda}[(\Psi')^{-1}](\lambda)\simeq\int_0^{\sqrt{2\lambda}}\frac{r\di r}{\Phi''\left((\Phi')^{-1}\left(\lambda-\frac{r^2}{2}\right)\right)},
    \end{align*}
    
    which implies the desired result.
\end{proof}

Next we prove the following theorem, which allows us to say that $f_0$ (from \eqref{def:th2.5b2}) inside on the support of $\rho_0$ is locally an steady state solution of Flat Vlasov-Poisson with a central mass density.

\begin{theorem}
    \label{teo:119}
    Suppose that $\Phi\in C^2([0,\infty))$, $\Phi''>0$, and $\rho_{ext}\in L^{4/n}(\R^2)$. Then $U$ belongs to the Hölder space $C^{1,1-\frac{n}{2}}(\Omega)$, for every bounded set $\Omega\subset\R^2$ with $C^1$ boundary, and therefore
    \begin{equation*}
        f_0=(\Phi')^{-1}(E_0-E)\chi_{E_0>E},
    \end{equation*}
    is an steady state solution of Flat Vlasov-Poisson system with a central mass density in $\Omega$.
\end{theorem}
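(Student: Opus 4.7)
The plan is to bootstrap the regularity of $U$ by combining the Euler-Lagrange identity $\rho_0 = (\Psi')^{-1}(E_0 - U)\chi_{\{E_0 > U\}}$ from Proposition \ref{def:teo25}(b) with the smoothing estimate $T\colon W^{k,p}\to W^{k+1,p}$ of Theorem \ref{def:teo117}. As a base case, the proof of Lemma \ref{def:contpot} already yields $\rho_0\in L^{4/n}(\R^2)$, and by hypothesis $\rho_{ext}\in L^{4/n}(\R^2)$; one application of Theorem \ref{def:teo117} with $k=0$ and $p=4/n$ gives $U=U_0+U_{ext}\in W^{1,4/n}(\R^2)$. Since $p=4/n>2$, Morrey's embedding produces the local Hölder bound $U\in C^{0,1-n/2}(\overline\Omega)$, and in particular $U$ is locally bounded.

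The central step is to upgrade $\rho_0$ to $W^{1,4/n}_{loc}(\R^2)$. Set $G:=(\Psi')^{-1}$. Lemma \ref{def:contpsi} gives $G\in C^1([0,\infty))$, and the explicit formula displayed there shows $G(0)=G'(0)=0$, because the upper limit of the representing integral collapses at $\lambda=0$. Hence the extension $\widetilde G(s):=G(s)$ for $s\geq 0$ and $\widetilde G(s):=0$ for $s<0$ is $C^1(\R)$ with derivative bounded on any compact interval. Since $U$ is locally bounded, the Sobolev chain rule applied to $\rho_0=\widetilde G(E_0-U)$ produces $\rho_0\in W^{1,4/n}_{loc}(\R^2)$ with weak gradient $\nabla\rho_0=-\widetilde G'(E_0-U)\,\nabla U$. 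To invoke Theorem \ref{def:teo117} in a local setting, fix $\chi\in C^\infty_c(\R^2)$ with $\chi\equiv 1$ on a neighborhood of $\overline\Omega$ and split $U_0=T(\chi\rho_0)+T((1-\chi)\rho_0)$. The first summand lies in $W^{2,4/n}(\R^2)$ by Theorem \ref{def:teo117} with $k=1$; the second is smooth on $\Omega$ because its integral kernel is smooth on $\Omega\times\supp(1-\chi)$. Thus $U_0\in W^{2,4/n}(\Omega)$, and a second Morrey embedding applied to $\nabla U_0$ yields $U_0\in C^{1,1-n/2}(\overline\Omega)$. An analogous bootstrap for $U_{ext}$ (whose input $\rho_{ext}\in L^{4/n}$ can be promoted to local $W^{1,4/n}$ by the parallel chain-rule argument, under the regularity assumed on the external data) leads to $U\in C^{1,1-n/2}(\overline\Omega)$.

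With $\nabla U$ now continuous on $\Omega$ and $(\Phi')^{-1}\in C^1([0,\infty))$ (from $\Phi\in C^2$, $\Phi''>0$) vanishing at zero, the function $f_0=(\Phi')^{-1}(E_0-E)\chi_{\{E_0>E\}}$ is $C^1$ away from the free boundary $\{E_0=E\}$ and continuous across it. Writing $f_0=F(E)$ with $E=\tfrac{1}{2}|\bvec v|^2+U(\bvec x)$, a direct computation gives
\begin{equation*}
\bvec v\cdot\nabla_{\bvec x}f_0-\nabla_{\bvec x}U\cdot\nabla_{\bvec v}f_0 = F'(E)\,\bvec v\cdot\nabla U - F'(E)\,\nabla U\cdot\bvec v = 0
\end{equation*}
pointwise on $\Omega\times\R^2$, so $f_0$ is a steady state of \eqref{eq:vlasovsdystate}.

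The main technical obstacle is legitimizing the Sobolev chain rule across the free boundary $\{E_0=U\}$, where the cutoff $\chi_{\{E_0>U\}}$ is not smooth. The rescue is precisely the cancellation $G(0)=G'(0)=0$ extracted from Lemma \ref{def:contpsi}, which promotes the composition from merely continuous to genuinely $C^1$ across the boundary and so prevents any singular contribution from entering the distributional gradient. Once this is in hand the remaining steps — Theorem \ref{def:teo117} via cutoff, followed by Morrey — are essentially mechanical.
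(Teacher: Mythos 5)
Your proposal follows essentially the same route as the paper: one application of the smoothing operator $T$ (Theorem \ref{def:teo117}) to get $U\in W^{1,4/n}$, then the chain rule on the Euler--Lagrange identity $\rho_0=(\Psi')^{-1}(E_0-U)\chi_{E_0>U}$ together with Lemma \ref{def:contpsi} to get $\rho_0\in W^{1,4/n}$, a second application of $T$, and finally Morrey/Sobolev embedding. Your extra care at the free boundary $\{E_0=U\}$ --- observing that $(\Psi')^{-1}(0)=((\Psi')^{-1})'(0)=0$ so that the truncated composition is genuinely $C^1$ and the distributional gradient picks up no singular term --- is a real improvement over the paper, which simply asserts the formula for $\nabla\rho_0$; likewise the cutoff decomposition $U_0=T(\chi\rho_0)+T((1-\chi)\rho_0)$ makes the local use of Theorem \ref{def:teo117} cleaner.

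There is, however, one step that does not go through as written: the claim that $\rho_{ext}$ ``can be promoted to local $W^{1,4/n}$ by the parallel chain-rule argument.'' There is no such parallel argument: $\rho_{ext}$ is prescribed external data and satisfies no Euler--Lagrange relation, so the only information available is the hypothesis $\rho_{ext}\in L^{4/n}(\R^2)$, which via Theorem \ref{def:teo117} yields only $U_{ext}\in W^{1,4/n}\hookrightarrow C^{0,1-n/2}_{loc}$, one derivative short of the stated conclusion $U\in C^{1,1-n/2}(\overline\Omega)$. To close this you would need an additional hypothesis such as $\rho_{ext}\in W^{1,4/n}(\R^2)$ (or enough decay/smoothness on $\rho_{ext}$ to handle $U_{ext}$ separately). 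Note that the paper's own proof has exactly the same gap --- it applies Theorem \ref{def:teo117} a second time to $U=T(\rho_0+\rho_{ext})$ after establishing $W^{1,4/n}$ regularity only for $\rho_0$ --- so you have reproduced the argument faithfully, but the appeal to ``the regularity assumed on the external data'' is appealing to an assumption that the theorem statement does not actually make.
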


\begin{proof}
    By Theorem \ref{def:teo117}, since $\rho_{0,ext}=\rho_0+\rho_{ext}\in L^{4/n}(\R^2)=W^{0,4/n}(\R^2)$, we have
    \begin{equation*}
        T\rho_{0,ext}=U\in W^{1,4/n}(\R^2).
    \end{equation*}
    Denote by $\nabla$ the weak gradient. Thus from \eqref{def:th2.5b1} we have
    \begin{equation*}
        \nabla\rho_0=-((\Psi')^{-1})'((E_0-U)_+)\nabla U_+.
    \end{equation*}
    By Lemma \ref{def:contpot} and Lemma \ref{def:contpsi}, $((\Psi')^{-1})'((E_0-U)_+)$ is bounded a.e., and therefore we have $\rho_0\in W^{1,4/n}(\R^2)$. Again by Theorem \ref{def:teo117} we can conclude that $U\in W^{2,4/n}(\R^2)$. Hence, by the Sobolev Embedding Theorem we conclude that
    \begin{equation*}
        U\in C^{1,1-\frac{n}{2}}(\bar{\Omega}). 
    \end{equation*}
    for every bounded set $\Omega\subset\R^2$ with $C^1$ boundary. Provided of the regularity of $U$ in every bounded set $\Omega$ of $S$ with $C^1$ boundary, and the regularity of $\Phi$, $f_0$ is an steady state solution of the Vlasov equation in $\Omega$, which is the required result.
\end{proof}

\begin{remark}
    We observe that since $\rho_0$ is continuous, spherically symmetric and nonincreasing, the support of $\rho_0$ is a closed ball $\overline{B}(0,R_0)$ for some $R_0>0$. Then we can take $\Omega=B(0,R)\subset\R^2$ with $R>R_0$, where Theorem \ref{teo:119} holds. Outside of $B(0,R)\subset\R^2\setminus\text{supp}(\rho_0)$ the function $1/|\cdot|$ is $C^1$, and therefore $U$ preserves this regularity. Hence, we can replace open and bounded sets in the hypotesis of Theorem \ref{teo:119} with the whole plane $\R^2$. This result completes the demonstration of Theorem \ref{def:maintheorem1}. 
\end{remark}

\section{Stability of minimizers}

The existence of a minimizer for the variational problem has just been proved in the last section. In the same way as in \cite{FirtReinI,schulze_2009}, we prove a similar result of stability, now for the flat case with central mass density. Before that, we prove some useful results. We expand over the minimizer $f_0$ given by \ref{def:maintheorem1}, and we have that

\begin{equation}
    \mathcal{E}_{\mathcal{C}}(f)-\mathcal{E}_{\mathcal{C}}(f_0)=d(f,f_0)-E_{pot}^1(\rho_f-\rho_{f_0}),
\end{equation}
where
\begin{equation}
    d(f,f_0):=\iint\left[\Phi(f)-\Phi(f_0)+E(f-f_0)\right]\di\bvec{v}\di\bvec{x},
\end{equation}
and in this case
\begin{equation*}
    E(\bvec{x},\bvec{v})=\frac{1}{2}|v|^2+U(\bvec{x})=\frac{1}{2}|v|^2+U_0(\bvec{x})+U_{ext}(\bvec{x})
\end{equation*}
is the energy defined in \ref{def:teo25}. Thus, as $\Phi$ is strictly convex, we can show that $d(f,f_0)\geq0$, where the equality holds if and only if $f=f_0$. We have the following lemma. 

\begin{lemma}
    \label{def:lema213}
    Let $(f_i)_{i\in\N}$ a minimizing sequence for $\mathcal{E}_\mathcal{C}$ en $\mathcal{F}_M$. Then $\rho_{f_i}$ is a minimizing sequence for $\mathcal{E}_\mathcal{C}^r$ in $\mathcal{F}_M^r$.
\end{lemma}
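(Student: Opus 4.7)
My plan is to reduce the statement to Proposition \ref{def:teo25}(a) combined with the identification of the two infima $I_M=\inf_{\mathcal{F}_M}\mathcal{E}_\mathcal{C}$ and $I_M^r=\inf_{\mathcal{F}_M^r}\mathcal{E}_\mathcal{C}^r$. The skeleton has three steps: (i) verify that each $\rho_{f_i}$ lies in $\mathcal{F}_M^r$; (ii) show that $I_M=I_M^r$; (iii) squeeze $\mathcal{E}_\mathcal{C}^r(\rho_{f_i})$ between $I_M^r$ and $\mathcal{E}_\mathcal{C}(f_i)$.

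For step (i), Fubini yields $\int\rho_{f_i}\di\bvec x=M$, and since the slice $f_i(\bvec x,\cdot)$ belongs to $\mathcal{G}_{\rho_{f_i}(\bvec x)}$ for almost every $\bvec x$, the definition of $\Psi$ in \eqref{eq:psi} gives the pointwise bound $\Psi(\rho_{f_i}(\bvec x))\leq\mathcal{I}(f_i(\bvec x,\cdot))$; integrating in $\bvec x$ yields $\int\Psi(\rho_{f_i})\di\bvec x\leq E_{\rm kin}(f_i)+\mathcal{C}(f_i)<\infty$. The $L^{4/3}$ integrability is then automatic via the growth condition on $\Psi$ and the interpolation in \eqref{def:eq4}, exactly as in the proof of Corollary \ref{def:cor210}. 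Hence $\rho_{f_i}\in\mathcal{F}_M^r$.

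For step (ii), Proposition \ref{def:teo25}(a) immediately gives $I_M\geq I_M^r$ by taking the infimum over $f\in\mathcal{F}_M$. The reverse inequality relies on the existence statement established in the previous section: the first part of Theorem \ref{def:maintheorem1}, together with Proposition \ref{def:teo25}(b), produces minimizers $\rho_0\in\mathcal{F}_M^r$ and $f_0\in\mathcal{F}_M$ satisfying $\rho_{f_0}=\rho_0$, and the equality clause of Proposition \ref{def:teo25}(a) then forces $I_M=\mathcal{E}_\mathcal{C}(f_0)=\mathcal{E}_\mathcal{C}^r(\rho_0)=I_M^r$. Step (iii) is now immediate, since Proposition \ref{def:teo25}(a) applied to each $f_i$ gives the sandwich $I_M^r\leq\mathcal{E}_\mathcal{C}^r(\rho_{f_i})\leq\mathcal{E}_\mathcal{C}(f_i)\to I_M=I_M^r$, so $\mathcal{E}_\mathcal{C}^r(\rho_{f_i})\to I_M^r$, which is exactly the claim.

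The argument is essentially a bookkeeping exercise once Proposition \ref{def:teo25}(a), the growth/integrability machinery of Lemmas \ref{def:lema24} and \ref{def:lema29}, and the existence of minimizers are in hand. The only real subtlety I anticipate is in step (ii): the equality $I_M=I_M^r$ is not self-evident from the reduction inequality alone and genuinely needs the construction of an actual minimizer, so this lemma sits logically after the existence proof rather than being independent of it.
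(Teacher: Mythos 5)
Your proposal is correct and follows essentially the same route as the paper: the sandwich $I_M^r\leq\mathcal{E}_\mathcal{C}^r(\rho_{f_i})\leq\mathcal{E}_\mathcal{C}(f_i)$ from Proposition \ref{def:teo25}(a), combined with the identification $I_M=I_M^r$ obtained from the constructed minimizers $\rho_0$ and $f_0$ with $\rho_{f_0}=\rho_0$. The only difference is that you explicitly verify $\rho_{f_i}\in\mathcal{F}_M^r$ via the slicing bound $\Psi(\rho_{f_i}(\bvec x))\leq\mathcal{I}(f_i(\bvec x,\cdot))$, a point the paper leaves implicit.
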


\begin{proof}
    It is clear from \ref{def:teo25} that
    \begin{equation*}
        \mathcal{E}_{\mathcal{C}}(f_i)\geq\mathcal{E}_{\mathcal{C}}^r(\rho_{f_i})\geq\inf_{\rho\in\mathcal{F}_M^r}\mathcal{E}_{\mathcal{C}}^r(\rho).
    \end{equation*}
    If $\rho_0$ is the minimizer for $\mathcal{E}_\mathcal{C}^r$ obtained from reduced problem and $f_0$ is the minimizer for $\mathcal{E}_\mathcal{C}$ induced by $\rho_0$,
    as $\rho_0=\rho_{f_0}$ we have that $\mathcal{E}_{\mathcal{C}}^r(\rho_{f_0})=\mathcal{E}_{\mathcal{C}}^r(\rho_0)$. By Proposition \ref{def:teo25}, we have that
    \begin{equation*}
        \mathcal{E}_{\mathcal{C}}(f_i)\rightarrow\inf_{f\in\mathcal{F}_M}\mathcal{E}_{\mathcal{C}}(f)=\mathcal{E}_{\mathcal{C}}(f_0)=\mathcal{E}_{\mathcal{C}}^r(\rho_0)=\inf_{\rho\in\mathcal{F}_M^r}\mathcal{E}_{\mathcal{C}}^r(\rho),
    \end{equation*}
    as we wanted to prove.
\end{proof}

\begin{lemma}
    \label{def:lema214}
    Let $(f_i)_{i\in\N}$ a minimizing sequence for $\mathcal{E}_{\mathcal{C}}$ in $\mathcal{F}_M$. Then it is bounded in $L^{1+1/k}(\R^4)$. In particular the sequence is weakly-sequentially compact in that space.
\end{lemma}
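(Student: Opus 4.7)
The plan is to bound the Casimir functional $\mathcal{C}(f_i)=\int\Phi(f_i)\di\bvec v\di\bvec x$ first, and then deduce the $L^{1+1/k}$ bound from the lower growth of $\Phi$ together with the fixed mass constraint.

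First I would transfer the problem to the reduced setting. Since $(f_i)_{i\in\N}$ is a minimizing sequence for $\mathcal{E}_{\mathcal{C}}$ in $\mathcal{F}_M$, Lemma \ref{def:lema213} tells us that $(\rho_{f_i})_{i\in\N}$ is a minimizing sequence for $\mathcal{E}_{\mathcal{C}}^r$ in $\mathcal{F}_M^r$. The argument used inside Corollary \ref{def:cor210}, relying only on the uniform inequality \eqref{def:cota1} (which does not require the sequence to be rearranged), then yields that $\int\Psi(\rho_{f_i})\di\bvec x$ is bounded, which together with \eqref{eq:cota1112} and the interpolation \eqref{def:eq4} gives uniform bounds
\begin{equation*}
\sup_i\|\rho_{f_i}\|_{1+1/n}<\infty,\qquad \sup_i\|\rho_{f_i}\|_{4/3}<\infty.
\end{equation*}

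Next I would bound the potential energy. By the Hardy--Littlewood--Sobolev estimates used in the proof of Lemma \ref{def:lema29} (compare \eqref{eqn:cota1618} and \eqref{eqn:cota16180}),
\begin{equation*}
|E_{\rm pot}(f_i)|\lesssim \|\rho_{f_i}\|_{4/3}^2+\|\rho_{ext}\|_{4/3}\|\rho_{f_i}\|_{4/3}\leq C.
\end{equation*}
Since $\Phi\geq 0$ (a convex function with $\Phi(0)=\Phi'(0)=0$ is nonnegative) and $E_{\rm kin}(f_i)\geq 0$, and $\mathcal{E}_{\mathcal{C}}(f_i)$ is bounded because $(f_i)$ is a minimizing sequence, we conclude
\begin{equation*}
\mathcal{C}(f_i)\leq \mathcal{E}_{\mathcal{C}}(f_i)+|E_{\rm pot}(f_i)|\leq C.
\end{equation*}

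The last step is to convert the Casimir bound into an $L^{1+1/k}$ bound. Since $\Phi(x)\gtrsim x^{1+1/k}$ for $x\geq \delta$ with some $\delta>0$, we split
\begin{equation*}
\int_{\R^4} f_i^{1+1/k}\di\bvec v\di\bvec x=\int_{\{f_i>\delta\}} f_i^{1+1/k}\di\bvec v\di\bvec x+\int_{\{f_i\leq\delta\}}f_i^{1+1/k}\di\bvec v\di\bvec x.
\end{equation*}
On the first region $f_i^{1+1/k}\lesssim \Phi(f_i)$, which integrates to something controlled by $\mathcal{C}(f_i)$; on the second region $f_i^{1+1/k}\leq \delta^{1/k}f_i$ and $\|f_i\|_1=M$. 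This yields $\sup_i\|f_i\|_{1+1/k}<\infty$. Since $1+1/k>1$, the space $L^{1+1/k}(\R^4)$ is reflexive, and Banach--Alaoglu gives weak sequential compactness.

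The main obstacle, and really the only nontrivial point, is Step 1: one needs that the reduced minimizing sequence $\rho_{f_i}$ is bounded in $L^{4/3}$ (and $L^{1+1/n}$) even though the original sequence $f_i$ has not been symmetrized. Fortunately the coercivity inequality \eqref{def:cota1} holds for every element of $\mathcal{F}_M^r$, so no symmetrization is needed, and everything else is standard.
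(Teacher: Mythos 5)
Your proof is correct, but it reaches the bound on the Casimir functional by a different route than the paper. The paper stays entirely at the level of $f$: it first shows $-E_{\rm pot}(f_i)\lesssim\mathcal{C}(f_i)^{k/2}+1$ (by the same Hardy--Littlewood--Sobolev and interpolation estimates as in Lemma \ref{def:lema29}, now expressed through $\Phi$ and $k$ rather than $\Psi$ and $n$), and then runs a coercivity argument of the form $\mathcal{E}_{\mathcal{C}}(f_i)\geq\mathcal{C}(f_i)\left(1-C\,\mathcal{C}(f_i)^{\frac{k}{2}-1}\right)-C$, concluding that $\mathcal{C}(f_i)$ must be bounded because $k<2$. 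You instead descend to the reduced problem via Lemma \ref{def:lema213}, extract the uniform $L^{4/3}$ bound on $\rho_{f_i}$ from the already-established coercivity inequality \eqref{def:cota1} (correctly noting that it needs no symmetrization), and then get $|E_{\rm pot}(f_i)|\leq C$ outright, so that $\mathcal{C}(f_i)\leq\mathcal{E}_{\mathcal{C}}(f_i)+|E_{\rm pot}(f_i)|$ closes the argument without any further coercivity step. Your version has the advantage of reusing machinery already proved for the reduced problem and avoids re-deriving the $f$-level exponent in the potential-energy bound (which in the paper requires relating $\int\Psi(\rho_{f_i})\di\bvec{x}$ to $\mathcal{C}(f_i)$); the paper's version is self-contained at the $f$-level and does not invoke Lemma \ref{def:lema213}, which in turn rests on the existence of the minimizer. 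The final conversion from the Casimir bound to the $L^{1+1/k}$ bound (splitting over $\{f_i>\delta\}$ and $\{f_i\leq\delta\}$ and using $\|f_i\|_1=M$) and the appeal to reflexivity are identical in both arguments.
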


\begin{proof}
    In a similar way as \eqref{eq:cota323}, using the fact that $f^{1+1/k}\lesssim\Phi(f)$ for large enough values of $f$, we can prove that
    \begin{align*}
        \iint f_i(\bvec{x},\bvec{v})^{1+1/k}\di\bvec{v}\di\bvec{x}\lesssim\mathcal{C}(f_i)+1. 
    \end{align*}
    where we recall that $\mathcal{C}$ is the Casimir-Energy functional defined in \eqref{eq:Casimir_Functional}: 
    \begin{equation*}
       \mathcal{C}(f_i)=\iint\Phi(f_i(\bvec{x},\bvec{v}))\di\bvec{v}\di\bvec{x}.
    \end{equation*}
    Similar as in the proof of the bounds \eqref{eqn:cota1618} and \eqref{eqn:cota16180} in  \ref{def:cor210}, we can prove that
    \begin{equation*}
        -E_{pot}(f_i)=-E_{pot}(\rho_{f_i})\lesssim\mathcal{C}(f_i)^{k/2}+1,
    \end{equation*}
    and hence we obtain the bound
    \begin{align*}
       \mathcal{E}_{\mathcal{C}}(f_i)\geq\mathcal{C}(f_i)\left(1-C\cdot\mathcal{C}(f_i)^{\frac{k}{2}-1}\right)-C,
    \end{align*}
    for a suitable constant $C>0$. Therefore, if the Casimir functional $\mathcal{C}(f_i)$ is not bounded, then $\mathcal{E}_\mathcal{C}(f_i)\rightarrow+\infty$, which is a contradiction. Hence $\mathcal{C}(f_i)$ is bounded, and therefore, so is $(f_i)_{i\in\N}$ in $L^{1+1/k}(\R^4)$. As $L^{1+1/k}(\R^4)$ is a reflexive space, by Banach-Aloglu Theorem, we can find a subsequence which is weakly-sequentially compact in that space. 
\end{proof}

We know that by Lemma \ref{def:lema213} proved above, if $(f_i)_{i\in\N}$ is a minimizing sequence for $\mathcal{E}_\mathcal{C}$ in $\mathcal{F}_M$, then $\rho_{f_i}$ is minimizing sequence for $\mathcal{E}_{\mathcal{C}}^r$ in $\mathcal{F}_M^r$, and passing through subsequence, we have already saw that it converges weakly to a minimizer $\tilde{\rho}_0$ for the reduced functional. We have the following result:

\begin{lemma}
    \label{def:lema215}
    Let $(f_i)_{i\in\N}$ a minimizing sequence for $\mathcal{E}_{\mathcal{C}}$ in $\mathcal{F}_M$ and let $f_0$ the minimizer obtained from Theorem \ref{def:maintheorem1}, induced by $\rho_0$, and we suppose that is unique. Then passing through subsequence, we have that $\rho_{f_i}\rightharpoonup\rho_0$ in $L^{1+1/n}(\R^2)$, and passing through subsequence, $f_i\rightharpoonup f_0$ en $L^{1+1/k}(\R^4)$.
\end{lemma}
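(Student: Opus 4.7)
The plan is to reduce everything to the variational machinery already developed. By Lemma \ref{def:lema213}, the sequence of spatial densities $(\rho_{f_i})_{i\in\N}$ is a minimizing sequence for $\mathcal{E}_\mathcal{C}^r$ on $\mathcal{F}_M^r$; and by Lemma \ref{def:lema214}, the sequence $(f_i)_{i\in\N}$ itself is bounded in $L^{1+1/k}(\R^4)$, so after passing to a subsequence it converges weakly to some $\tilde{f}$ in that space. The first step is to notice that the argument in Corollary \ref{def:cor210} that establishes boundedness in $L^{1+1/n}(\R^2) \cap L^{4/3}(\R^2)$ only uses the energy bound \eqref{def:cota1} together with interpolation, and does not require the minimizing sequence to already be rearranged. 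Hence $(\rho_{f_i})$ is bounded in both spaces and, passing again to a subsequence, $\rho_{f_i}\rightharpoonup \tilde{\rho}_0$ in $L^{1+1/n}(\R^2)$ and in $L^{4/3}(\R^2)$; the identification of the two weak limits is exactly the Dominated Convergence argument at the end of Corollary \ref{def:cor210}.

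Next I would argue that $\tilde{\rho}_0$ is a minimizer of $\mathcal{E}_\mathcal{C}^r$ on $\mathcal{F}_M^r$, essentially following the template of the proof of Theorem \ref{def:maintheorem1}. The Mazur plus Fatou argument gives
\begin{equation*}
    \int\Psi(\tilde{\rho}_0)\di\bvec x\leq\liminf_{i\rightarrow\infty}\int\Psi(\rho_{f_i})\di\bvec x,
\end{equation*}
so the Casimir part is weakly lower semicontinuous. The external piece $E^{\varepsilon}_{\text{pot}}$ passes to the limit because $U_{\text{ext}}\in L^4(\R^2)=(L^{4/3}(\R^2))^{*}$ and $\rho_{f_i}\rightharpoonup\tilde{\rho}_0$ in $L^{4/3}(\R^2)$. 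For the self-interaction piece one wants $E^1_{\text{pot}}(\rho_{f_i})\to E^1_{\text{pot}}(\tilde{\rho}_0)$ together with the mass preservation $\int\tilde{\rho}_0\di\bvec x=M$; both would then yield $\mathcal{E}_\mathcal{C}^r(\tilde{\rho}_0)\leq I_M$, so that $\tilde{\rho}_0$ is a minimizer and, by the uniqueness hypothesis, $\tilde{\rho}_0=\rho_0$.

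The principal obstacle is precisely that Lemma \ref{def:lema212} and the mass-preservation step in the proof of Theorem \ref{def:maintheorem1} used spherical symmetry of the minimizing sequence to invoke the pointwise bound $\rho_i(\bvec x)\lesssim |\bvec x|^{-2}$, and we no longer have it. The way around this uses the external potential itself: since $U_{\text{ext}}$ breaks translation invariance and is attractive (so $U_{\text{ext}}(\bvec x)\to 0$ as $|\bvec x|\to\infty$), a concentration-compactness-type splitting of $\rho_{f_i}$ into a part inside $B(0,R)$ and a part escaping to infinity would make $-E^{\varepsilon}_{\text{pot}}$ drop, contradicting that $(\rho_{f_i})$ is minimizing. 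This tightness of $(\rho_{f_i})$ as a sequence of measures then lets us split the integrals on $\{|\bvec x-\bvec y|<R_1\}$, $\{|\bvec x-\bvec y|\geq R_1\}\cap U_{R_2}$, and $\{|\bvec x-\bvec y|\geq R_1\}\cap U_{R_2}^c$ as in the proof of Lemma \ref{def:lema212}, with the bounds on tails now coming from tightness rather than from radial decay. The outcome is $E^1_{\text{pot}}(\rho_{f_i})\to E^1_{\text{pot}}(\tilde{\rho}_0)$ and $\int \tilde{\rho}_0 \di\bvec x = M$, and so $\tilde{\rho}_0=\rho_0$ by uniqueness.

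It remains to upgrade this to $f_i\rightharpoonup f_0$ in $L^{1+1/k}(\R^4)$. The weak limit $\tilde{f}$ obtained from Lemma \ref{def:lema214} has a well-defined spatial density $\rho_{\tilde f}(\bvec x)=\int\tilde f(\bvec x,\bvec v)\di\bvec v$, and testing the weak convergence $f_i\rightharpoonup\tilde f$ against functions of the separated form $\chi_{A}(\bvec x)\psi(\bvec v)$, $\psi\in C_c(\R^2)$, together with the uniform integrability in $\bvec v$ furnished by the kinetic-energy bound, identifies $\rho_{\tilde f}$ with the weak limit of $\rho_{f_i}$, i.e. $\rho_{\tilde f}=\rho_0$. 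By the weak lower semicontinuity of the Casimir functional (Mazur plus Fatou applied to $\Phi$) and the convergence of the potential energies just proved, one gets $\mathcal{E}_\mathcal{C}(\tilde f)\leq\liminf\mathcal{E}_\mathcal{C}(f_i)=I_M$, so $\tilde f$ is itself a minimizer of $\mathcal{E}_\mathcal{C}$ in $\mathcal{F}_M$. The uniqueness hypothesis on $f_0$ then gives $\tilde f=f_0$, completing the proof.
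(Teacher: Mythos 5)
Your overall route is the same as the paper's: reduce to the densities via Lemma \ref{def:lema213}, extract weak limits via Lemma \ref{def:lema214} and the boundedness from Corollary \ref{def:cor210}, identify the spatial density of the weak limit $\tilde f$ with the weak limit of $(\rho_{f_i})$, and then conclude $\tilde f=f_0$ from lower semicontinuity of the convex part plus the uniqueness hypothesis. In particular your last paragraph --- testing $f_i\rightharpoonup\tilde f$ against separated cutoffs $\chi_{B(0,R_1)}(\bvec x)\chi_{B(0,R_2)}(\bvec v)$ and using the kinetic-energy bound $\iint_{|\bvec v|\geq R_2}f_i\di\bvec v\di\bvec x\leq 2R_2^{-2}E_{\rm kin}(f_i)$ to control the high-velocity tail --- is exactly the computation that occupies most of the paper's proof, and it is correct.

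Where you diverge is the middle of the argument: you insert a concentration-compactness discussion to show that the weak limit $\tilde\rho_0$ of the (non-rearranged) sequence $(\rho_{f_i})$ is a minimizer of $\mathcal{E}_{\mathcal C}^r$, and hence equals $\rho_0$. Your instinct that something must be said here is sound --- Lemma \ref{def:lema212} and the mass-conservation step in the proof of Theorem \ref{def:maintheorem1} do use the radial pointwise bound $\rho_i(\bvec x)\lesssim|\bvec x|^{-2}$, which is unavailable for a general minimizing sequence --- but your repair is only a heuristic. The claim that a splitting of mass ``would make $-E^{\varepsilon}_{\rm pot}$ drop, contradicting minimality'' requires a quantitative strict subadditivity inequality of the form $I_M<I_{M''}+I^0_{M-M''}$ together with a genuine dichotomy/vanishing analysis for the non-symmetric sequence; none of this is carried out, and it is not obvious (one must rule out, e.g., the whole configuration translating to infinity, or a vanishing profile, using only that $U_{\rm ext}$ is attractive). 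The paper itself does not perform this analysis either: it passes directly from ``$(\rho_{f_i})$ is a minimizing sequence'' to ``$\rho_{f_i}\rightharpoonup\rho_0$'' by invoking Corollary \ref{def:cor210} together with the uniqueness hypothesis, which presupposes that the weak limit of an arbitrary minimizing sequence of densities is a minimizer. So the step you flagged is a real soft spot, but as written your proposal neither closes it nor matches the paper's (terser) treatment; to be complete you would need to either prove the tightness claim in full or restrict, as the paper implicitly does, to the statement that the weak limit inherits the minimizing property from the machinery of Section 3.
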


\begin{proof}
    By Lemma \ref{def:lema213}, we know that $(\rho_{f_i})_{i\in\N}$ is a minimizing sequence for $\mathcal{E}_{\mathcal{C}}^r$ in $\mathcal{F}_M^r$, and by Corollary \ref{def:cor210} and uniqueness of $f_0$, passing through subsequence we conclude that $\rho_{f_i}\rightharpoonup\rho_0$ in $L^{1+1/n}(\R^2)$, where $\rho_0=\rho_{f_0}$. By Lemma \ref{def:lema214}, we have that, passing by subsequence again, $f_i\rightharpoonup\tilde{f}_0$ en $L^{1+1/k}(\R^4)$. We will prove that $\rho_0=\rho_{\tilde{f}_0}$ almost everywhere. For them, let $A$ be an arbitraty Lebesgue measurable set and let $R_1,R_2>0$ be arbitrary positive numbers. We have that
    \begin{align*}
        \int_{A}\chi_{B(0,R_1)}(\bvec{x})\rho_{\tilde{f}_0}(\bvec{x})\di\bvec{x} &=\iint_{A}\chi_{B(0,R_1)}(\bvec{x})\chi_{B(0,R_2)}(\bvec{v})\tilde{f}_0(\bvec{x},\bvec{v})\di\bvec{v}\di\bvec{x}\\
        &+\iint_{A}\chi_{B(0,R_1)}(\bvec{x})\chi_{B(0,R_2)^c}(\bvec{v})\tilde{f}_0(\bvec{x},\bvec{v})\di\bvec{v}\di\bvec{x}.
    \end{align*}
    It is obvious that the map $(\bvec{x},\bvec{v})\mapsto\chi_{B(0,R_1)}(\bvec{x})\chi_{B(0,R_2)}(\bvec{v})$ is an element of the dual space of $L^{1+1/k}(\R^4)$ which is $L^{k+1}(\R^4)$, and therefore
    \begin{align*}
        \iint_{A}\chi_{B(0,R_1)}(\bvec{x})\chi_{B(0,R_2)}(\bvec{v})\tilde{f}_0(\bvec{x},\bvec{v})\di\bvec{v}\di\bvec{x}&\leq \lim_{i\rightarrow\infty}\iint_{A}\chi_{B(0,R_1)}(\bvec{x})f_i(\bvec{x},\bvec{v})\di\bvec{v}\di\bvec{x}\\
        &=\lim_{i\rightarrow\infty}\int_{A}\chi_{B(0,R_1)}(\bvec{x})\rho_{f_i}(\bvec{x})\di\bvec{x}\\
        &=\int_{A}\chi_{B(0,R_1)}(\bvec{x})\rho_{0}(\bvec{x})\di\bvec{x},
    \end{align*}
    where the last equality comes from the fact that $\bvec{x}\mapsto\chi_{B(0,R_1)}(\bvec{x})$ is an element of $L^{1+1/n}(\R^2)^{\ast}=L^{n+1}(\R^2)$. On the other hand, we have that
    \begin{align*}
        \iint_{A}\chi_{B(0,R_1)}(\bvec{x})\chi_{B(0,R_2)^c}(\bvec{v})\tilde{f}_0(\bvec{x},\bvec{v})\di\bvec{v}\di\bvec{x}&\leq\frac{2}{R_2^2}\iint\frac{|\bvec{v}|^2}{2}f_0(\bvec{x},\bvec{v})\di\bvec{v}\di\bvec{x}\\
        &=\frac{2}{R_2^2}E_{kin}(\tilde{f}_0).
    \end{align*}
    Therefore
    \begin{equation*}
        \int_{A}\chi_{B(0,R_1)}(\bvec{x})\rho_{\tilde{f}_0}(\bvec{x})\di\bvec{x}\leq\int_{A}\chi_{B(0,R_1)}(\bvec{x})\rho_{0}(\bvec{x})\di\bvec{x}+\frac{2}{R_2^2}E_{kin}(\tilde{f}_0),
    \end{equation*}
    and thus if $R_1,R_2\rightarrow+\infty$ we have found
    \begin{equation}
        \int_A\rho_{\tilde{f}_0}(\bvec{x})\di\bvec{x}\leq\int_A\rho_0(\bvec{x})\di\bvec{x}.
    \end{equation}
    On the other hand, by weak convergence again we have that
    \begin{align*}
        \int_A\chi_{B(0,R_1)}(\bvec{x})\rho_0(\bvec{x})\di\bvec{x}&=\liminf_{i\rightarrow\infty}\iint_A\chi_{B(0,R_1)}(\bvec{x})\chi_{B(0,R_2)}(\bvec{v})f_i(\bvec{x},\bvec{v})\di\bvec{v}\di\bvec{x}+\\
        &\liminf_{i\rightarrow\infty}\iint_A\chi_{B(0,R_1)}(\bvec{x})\chi_{B(0,R_2)^c}(\bvec{v})f_i(\bvec{x},\bvec{v})\di\bvec{v}\di\bvec{x}.
    \end{align*}
    Therefore, we have
    \begin{align*}
        \liminf_{i\rightarrow\infty}\iint_A\chi_{B(0,R_1)}(\bvec{x})\chi_{B(0,R_2)}(\bvec{v})f_i(\bvec{x},\bvec{v})\di\bvec{v}\di\bvec{x}&\leq\int_A\chi_{B(0,R_1)}(\bvec{x})\rho_{\tilde{f}_0}(\bvec{x})\di\bvec{x}.
    \end{align*}
    It is enough to prove that the second term goes to 0. Note that
    \begin{align*}
        \liminf_{i\rightarrow\infty}\iint_A\chi_{B(0,R_1)}(\bvec{x})\chi_{B(0,R_2)^c}(v)f_i(\bvec{x},\bvec{v})\di\bvec{v}\di\bvec{x}&\leq\frac{2}{R_2^2}\liminf_{i\rightarrow\infty}E_{kin}(f_i)\\
        &\lesssim\frac{1}{R_2^2}.
    \end{align*}
    Thus, if $R_1,R_2\rightarrow\infty$, we have that
    \begin{equation}
        \int_A\rho_0(\bvec{x})\di\bvec{x}\leq\int_A\rho_{\tilde{f}_0}(\bvec{x})\di\bvec{x},
    \end{equation}
    and therefore
    \begin{equation*}
        \int_A\rho_0(\bvec{x})\di\bvec{x}=\int_A\rho_{\tilde{f}_0}(\bvec{x})\di\bvec{x},
    \end{equation*}
    and this is for every Lebesgue measurable set $A$. Hence $\rho_0=\rho_{\tilde{f}_0}$ almost everywhere and therefore $E_{pot}(\tilde{f}_0)=E_{pot}(f_0)$. Thus
    \begin{align*}
        \mathcal{E}_{\mathcal{C}}(\tilde{f}_0)\leq\inf_{f\in\mathcal{F}_M}\mathcal{E}_{\mathcal{C}}(f).
    \end{align*}
    
    Since $\rho_{\tilde{f}_0}=\rho_0$ almost everywhere, we have that $\tilde{f}_0$ integrates $M$, and thus it is an element of the feasible set $\mathcal{F}_M$. Therefore is a minimizer of Casimir-Energy functional, and by the uniqueness of minimizer we have that $\tilde{f}_0=f_0$, as we desired to prove.
\end{proof} 

Next we prove the main result from this section, which gives us a notion of stability for the minimizer given by Theorem \ref{def:maintheorem1}, analogous to the stability result from \cite{FirtReinI}. 

\begin{theorem}
\label{def:teo215}
Let $f_0$ a minimizer for $\mathcal{E}_{\mathcal{C}}$ in $\mathcal{F}_{M}$ and we suppose that is unique, and let $\rho_0:=\rho_{f_0}$. Let $\varepsilon>0$, then there is some $\delta>0$ such that for every solution of flat Vlasov-Poisson system with central mass density $t\mapsto f(t)$, with initial datum $f(0)\in C_c^{1}(\R^4)\cap\mathcal{F}_M$, if
\begin{equation}
    \label{def:eq111}
    d(f(0),f_0)-E_{pot}^1(\rho_{f(0)}-\rho_0)<\delta,
\end{equation}
then
\begin{equation}
    \label{def:eq112}
    d(f(t),f_0)-E_{pot}^1(\rho_{f(t)}-\rho_0)<\varepsilon,
\end{equation}
for every $t\geq 0$.

\end{theorem}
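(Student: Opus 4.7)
The plan is to combine the algebraic expansion $\mathcal{E}_{\mathcal{C}}(f) - \mathcal{E}_{\mathcal{C}}(f_0) = d(f,f_0) - E_{pot}^1(\rho_f - \rho_{f_0})$ recorded just before the theorem with the invariance of $\mathcal{E}_{\mathcal{C}}$ along the flat Vlasov--Poisson flow; this reduces the stability statement to the trivial choice $\delta := \varepsilon$. The whole argument rests on observing that the ``distance'' $d(f(t),f_0) - E_{pot}^1(\rho_{f(t)} - \rho_0)$ is itself a conserved quantity in time, because it equals $\mathcal{E}_{\mathcal{C}}(f(t)) - \mathcal{E}_{\mathcal{C}}(f_0)$.

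First I would verify three conservation laws along a classical solution $t \mapsto f(t)$ issued from $f(0) \in C_c^1(\R^4) \cap \mathcal{F}_M$. Because the Vlasov equation \eqref{eq:Vlasov_Poisson} transports $f$ along the divergence-free phase-space vector field $(\bvec v, -\nabla_{\bvec x} U)$, the associated flow is measure-preserving. This immediately yields conservation of the total mass $\iint f \,\di\bvec v\,\di\bvec x = M$, so that $f(t) \in \mathcal{F}_M$ for every $t \geq 0$, and conservation of the Casimir functional $\mathcal{C}(f(t))$. For the total energy, a direct computation gives
\begin{equation*}
  \frac{d}{dt}E_{\text{kin}}(f(t)) = -\iint \bvec v \cdot \nabla_{\bvec x} U\,f\,\di\bvec v\,\di\bvec x,
\end{equation*}
\begin{equation*}
  \frac{d}{dt}E_{\text{pot}}(f(t)) = \iint \bvec v \cdot \nabla_{\bvec x} U\,f\,\di\bvec v\,\di\bvec x,
\end{equation*}
the second identity using $\partial_t \rho_f = -\nabla_{\bvec x} \cdot j_f$ together with the symmetry of the kernel $1/|\bvec x - \bvec y|$; the external piece $E_{\text{pot}}^{\varepsilon}$ is handled in the same way, simplified by the fact that $U_{\text{ext}}$ is time independent. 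The two derivatives cancel, so $\mathcal{E}_{\mathcal{C}}(f(t)) = \mathcal{E}_{\mathcal{C}}(f(0))$ for all $t \geq 0$.

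With these conservation laws the conclusion is immediate. Applying the algebraic identity at times $t$ and $0$ and using $\mathcal{E}_{\mathcal{C}}(f(t)) = \mathcal{E}_{\mathcal{C}}(f(0))$ yields
\begin{align*}
  d(f(t), f_0) - E_{pot}^1(\rho_{f(t)} - \rho_0) &= \mathcal{E}_{\mathcal{C}}(f(t)) - \mathcal{E}_{\mathcal{C}}(f_0) \\
  &= \mathcal{E}_{\mathcal{C}}(f(0)) - \mathcal{E}_{\mathcal{C}}(f_0) \\
  &= d(f(0), f_0) - E_{pot}^1(\rho_{f(0)} - \rho_0),
\end{align*}
so the hypothesis \eqref{def:eq111} transfers verbatim to \eqref{def:eq112} with $\delta := \varepsilon$. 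The two summands in the distance are separately nonnegative (the first by convexity of $\Phi$ combined with the Euler--Lagrange characterisation of $f_0$, the second by positive-definiteness of the Coulomb-type kernel $1/|\bvec x - \bvec y|$), so this bound controls each of them individually.

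The main technical obstacle, then, is not the stability argument itself but the rigorous justification of the conservation laws in the flat setting: unlike classical three-dimensional Vlasov--Poisson, the kernel $-1/|\bvec x - \bvec y|$ here in dimension two is not the fundamental solution of the two-dimensional Laplacian, so the formal phase-space integrations by parts above must be legitimised through a local classical well-posedness theory for the flat system that propagates the $C_c^1$ regularity of $f(0)$. Once such a framework is available, the remainder of the proof is exactly as sketched; in particular, no compactness argument, and no use of the uniqueness of $f_0$ beyond what is needed to make the statement unambiguous, is required.
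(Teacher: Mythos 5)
There is a genuine gap, and it sits at the very heart of your argument: the quantity you claim is conserved is not. Expanding the functional with the definitions of the paper ($E_{pot}^1(\rho)=-\mathcal{D}(\rho,\rho)$, $U_0=-|\cdot|^{-1}\ast\rho_0$, $E=\tfrac12|\bvec v|^2+U_0+U_{ext}$) gives
\begin{equation*}
	\mathcal{E}_{\mathcal{C}}(f)-\mathcal{E}_{\mathcal{C}}(f_0)=d(f,f_0)+E_{pot}^1(\rho_f-\rho_0)=d(f,f_0)-\mathcal{D}(\rho_f-\rho_0,\rho_f-\rho_0),
\end{equation*}
i.e.\ the conserved energy difference is the \emph{difference} of the two nonnegative quantities $d(f,f_0)$ and $\mathcal{D}(\rho_f-\rho_0,\rho_f-\rho_0)=-E_{pot}^1(\rho_f-\rho_0)$, whereas the quantity controlled in \eqref{def:eq111}--\eqref{def:eq112} is their \emph{sum}. (The identity displayed just before the theorem carries a sign slip — it is inconsistent with the remark after the theorem, which records $\|\rho\|_{pot}^2=-E_{pot}^1(\rho)\geq 0$; you have taken that display at face value.) Consequently your two summands are \emph{not} "separately nonnegative" inside the conserved quantity, and the choice $\delta=\varepsilon$ fails: smallness of the sum at $t=0$ does imply smallness of the conserved difference, but at a later time $t$ energy conservation only tells you that $d(f(t),f_0)-\mathcal{D}(\rho_{f(t)}-\rho_0,\rho_{f(t)}-\rho_0)$ is small, which is compatible with both terms being large. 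If the theorem really were a one-line consequence of conservation, none of the variational machinery of Sections 3--5 would be needed; the entire energy--Casimir method exists precisely because the Coulomb term enters with the unfavourable sign.

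The missing idea is the compactness step that the paper uses to close this loop. One argues by contradiction: a sequence of solutions violating the conclusion at times $t_i$ produces, via conservation of $\mathcal{E}_{\mathcal{C}}$ and the expansion above, a minimizing sequence $(f_i(t_i))_i$ of $\mathcal{E}_{\mathcal{C}}$ in $\mathcal{F}_M$; by Lemma \ref{def:lema213} the spatial densities $\rho_{f_i(t_i)}$ form a minimizing sequence for the reduced functional, and by Lemma \ref{def:lema215} (which is where the assumed \emph{uniqueness} of $f_0$ is genuinely used, contrary to your closing remark) they converge weakly to $\rho_0$; the convergence of potential energies then forces $E_{pot}^1(\rho_{f_i(t_i)}-\rho_0)\to 0$, and only \emph{then} does conservation of energy yield $d(f_i(t_i),f_0)\to 0$, contradicting \eqref{def:eq6}. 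Your discussion of the conservation laws themselves (measure preservation of the characteristic flow, cancellation of kinetic and potential time derivatives) is correct and is indeed presupposed by the paper, but it is the easy half of the argument; without the compactness step the proof does not go through.
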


\begin{proof}
    We will proceed by contradiction. If the assertion is not true, then there exists some $\epsilon_0>0$, such that for every $i\in\N$, there is some $t_i>0$, and a solution $f_i$ from flat Vlasov-Poisson system with central mass density, such that $f_i(0)\in C^1_c(\R^4)\cap\mathcal{F}_M$ with
    \begin{equation}
        \label{def:eq5}
        d(f_i(0),f_0)-E_{pot}^1(\rho_{f_i(0)}-\rho_0)<\frac{1}{i},
    \end{equation}
    and
    \begin{equation}
    \label{def:eq6}
    d(f_i(t_i),f_0)-E_{pot}^1(\rho_{f_i(t_i)}-\rho_0)\geq\epsilon_0.
\end{equation}
By \eqref{def:eq5}, we have that $\mathcal{E}_{\mathcal{C}}(f_i(0))-\mathcal{E}_{\mathcal{C}}(f_0)=d(f_i(0),f_0)-E_{pot}^1(\rho_{f_i(0)}-\rho_0)\rightarrow 0$. As the Casimir-Energy functional $\mathcal{E}_{\mathcal{C}}$ is a conserved quantity, then $\mathcal{E}_{\mathcal{C}}(f_i(0))=\mathcal{E}_{\mathcal{C}}(f_i(t_i))\rightarrow\mathcal{E}_{\mathcal{C}}(f_0)$. Then we have that $(f_i(t_i))_{i\in\N}$ is a minimizing sequence for $\mathcal{E}_{\mathcal{C}}$ in $\mathcal{F}_M$, and therefore passing through subsequence, we have that $(\rho_{f_i(t_i)})_{i\in\N}$ is a minimizing sequence for $\mathcal{E}_{\mathcal{C}}^r$ in $\mathcal{F}_M^r$, and as $f_0$ is unique, by Lemma \ref{def:lema214}, we have that $\rho_{f_i(t_i)}\rightharpoonup\rho_0$ in $L^{1+1/n}(\R^2)$. By Corollary \ref{def:cor210}, we have that $E_{pot}^1(\rho_{f_i(t_i)}-\rho_0)\rightarrow 0$, and thus as
\begin{equation*}
    \mathcal{E}_{\mathcal{C}}(f_i(t_i))-\mathcal{E}_{\mathcal{C}}(f_0)=d(f_i(t_i),f_0)-E_{pot}^1(\rho_{f_i(t_i)}-\rho_0),
\end{equation*}
we have that $d(f_i(t_i),f_0)\rightarrow 0$, which contradicts \ref{def:eq6}.

\end{proof}

\begin{remark}
    As we mentioned in \ref{def:coulomb}, the Coulomb energy $\mathcal{D}$ is an inner product over $L^{4/3}(\R^2)$ which induces a norm in that space, given by
    \begin{equation}
        \|\rho\|_{pot}:=\mathcal{D}(\rho,\rho)^{1/2}=(-E_{pot}^1(\rho))^{1/2},
    \end{equation}
    and therefore we can replace $-E_{pot}(\cdot)$ by $\|\cdot\|_{pot}$ in Theorem \ref{def:teo215}.
\end{remark}

\begin{corollary}
    \label{def:cor217}
    Let $\varepsilon>0$. Under the assumptions from \ref{def:teo215}, and supposing that $\|f(0)\|_{1+1/k}=\|f_0\|_{1+1/k}$, then there is some $\delta>0$ such that if \eqref{def:eq111} holds, then
    
    \begin{equation}
        \|f(t)-f_0\|_{1+1/k}<\varepsilon.
    \end{equation}
\end{corollary}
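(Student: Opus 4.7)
The plan is to argue by contradiction, closely mirroring the scheme of Theorem \ref{def:teo215}. Assuming the statement fails, I extract $\varepsilon_0>0$, a sequence of solutions $f_i$ with initial data $f_i(0)\in C^1_c(\R^4)\cap \mathcal{F}_M$ satisfying $\|f_i(0)\|_{1+1/k}=\|f_0\|_{1+1/k}$ and $d(f_i(0),f_0)-E_{pot}^1(\rho_{f_i(0)}-\rho_0)<1/i$, together with times $t_i>0$ realising $\|f_i(t_i)-f_0\|_{1+1/k}\geq\varepsilon_0$, and aim to produce a contradiction at time $t_i$.

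First I would identify $(f_i(t_i))_{i\in\N}$ as a minimizing sequence for $\mathcal{E}_{\mathcal{C}}$ over $\mathcal{F}_M$, exactly as in the proof of Theorem \ref{def:teo215}: conservation of the Casimir--Energy functional along the Flat Vlasov--Poisson flow gives $\mathcal{E}_{\mathcal{C}}(f_i(t_i))=\mathcal{E}_{\mathcal{C}}(f_i(0))\to\mathcal{E}_{\mathcal{C}}(f_0)$, while conservation of total mass keeps each $f_i(t_i)$ in $\mathcal{F}_M$. The uniqueness hypothesis on $f_0$ then lets Lemma \ref{def:lema215} supply a subsequence along which $f_i(t_i)\rightharpoonup f_0$ weakly in $L^{1+1/k}(\R^4)$.

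The second step upgrades the weak convergence by adjoining convergence of norms. Because the Vlasov characteristic flow is measure-preserving on $\R^4$, every $L^p$ norm of a classical solution is conserved in time; in particular, combined with the standing hypothesis $\|f_i(0)\|_{1+1/k}=\|f_0\|_{1+1/k}$, this forces
\[
\|f_i(t_i)\|_{1+1/k}=\|f_i(0)\|_{1+1/k}=\|f_0\|_{1+1/k}
\]
for every $i$. Thus I have both $f_i(t_i)\rightharpoonup f_0$ and $\|f_i(t_i)\|_{1+1/k}\to\|f_0\|_{1+1/k}$.

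The final step closes the argument through the Radon--Riesz property: since $1<1+1/k<\infty$, the space $L^{1+1/k}(\R^4)$ is uniformly convex, so weak convergence plus convergence of norms upgrades automatically to strong convergence. Therefore $f_i(t_i)\to f_0$ in $L^{1+1/k}(\R^4)$, contradicting $\|f_i(t_i)-f_0\|_{1+1/k}\geq\varepsilon_0$. The only delicate ingredient is the conservation of the $L^{1+1/k}$ norm along the flow; this is entirely standard in the $C^1_c$ classical regime imposed on the initial data, but since it is not explicitly furnished by the earlier lemmas, it is the one point I expect to have to cite (or record) explicitly.
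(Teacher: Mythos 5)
Your proposal is correct and follows essentially the same route as the paper: contradiction, conservation of the Casimir--Energy and of the $L^{1+1/k}$ norm along the flow, weak convergence $f_i(t_i)\rightharpoonup f_0$ via Lemma \ref{def:lema215}, and the Radon--Riesz upgrade from uniform convexity of $L^{1+1/k}(\R^4)$. The one point you flag --- that norm conservation comes from the measure-preserving characteristic flow --- is left implicit in the paper, so making it explicit is a small improvement rather than a divergence.
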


\begin{proof}
    In the same way as in the proof from the above theorem, if we assume the opposite, we can build a minimizing sequence $(f_i(t_i))_{i\in\N}$ such that 
    \begin{equation*}
        \|f_i(t_i)\|_{1+1/k}=\|f_i(0)\|_{1+1/k}=\|f_0\|_{1+1/k}.
    \end{equation*}
    By Lemma \ref{def:lema215}, we have that $f_i(t_i)\rightharpoonup f_0$ and also $\|f_i(t_i)\|_{1+1/k}\rightarrow\|f_0\|_{1+1/k}$. Using the fact that $L^{1+1/k}(\R^4)$ is uniformly convex, we have that $f_i(t_i)$ converges to $f_0$ in $L^{1+1/k}(\R^4)$, which contradicts $\|f_i(t_i)-f_0\|_{1+1/k}\geq\epsilon_0$.
\end{proof}

\begin{remark} 
Interpolating we have that the result of Corollary \ref{def:cor217} is true in $L^p(\R^4)$ norm, for every $1<p\leq 1+1/k$. An open and interesting question is to study the uniqueness of the minimizer for the variational problem, which is an important hypothesis in the non-linear stability properties proved above. Global existence of classical solutions of Flat Vlasov-Poisson system given an initial datum $C^1_c(\R^2)$ have not been proved yet. The results of non-linear stability are conditional to have the existence of these kind of solutions.
\end{remark}

\bibliographystyle{alpha}
\bibliography{bibliografia}

\begin{thebibliography}{MRVDB22}

\bibitem[Arn89]{Arnold-Book}
V.I. Arnold.
\newblock {\em Mathematical Methods of Classical Mechanics}.
\newblock Springer-Verlag, New York, 1989.

\bibitem[BL19]{doi:10.1007/s00205-018-1277-6}
E.~Batt, J. and Y.~Li.
\newblock Stationary solutions of the flat vlasov–poisson system.
\newblock {\em Archive for Rational Mechanics and Analysis}, 231:189–232, 2019.

\bibitem[Bre10]{brezis2010functional}
H.~Brezis.
\newblock {\em Functional Analysis, Sobolev Spaces and Partial Differential Equations}.
\newblock Universitext. Springer New York, 2010.

\bibitem[Car04]{Carroll-Book}
S.~Carroll.
\newblock {\em Spacetime and Geometry: An introduction to General Relativity}.
\newblock Addison Wesley, San Francisco, USA, 2004.

\bibitem[CL22]{1937-5093_2022_3_403}
S.~Chaturvedi and J.~Luk.
\newblock Phase mixing for solutions to 1d transport equation in a confining potential.
\newblock {\em Kinetic and Related Models}, 15(3):403--416, 2022.

\bibitem[Eva98]{Evans-Book}
L.C. Evans.
\newblock {\em Partial differential equations}.
\newblock American Mathematical Society, Providence, Rhode Island, 1998.

\bibitem[Fir07]{FirtReinII}
R.~Firt.
\newblock Stability of disk-like galaxies – {P}art {II}: The {K}uzmin disk.
\newblock {\em Analysis}, 27(4):405--424, 2007.

\bibitem[FR06]{FirtReinI}
R.~Firt and G.~Rein.
\newblock Stability of disk-like galaxies – {P}art {I}: {S}tability via reduction.
\newblock {\em Analysis}, 26(4):507--525, 2006.

\bibitem[GPS02]{Goldstein-Book}
H.~Goldstein, C.~Poole, and J.~Safko.
\newblock {\em Classical Mechanics}.
\newblock Addison-Wesley, San Francisco, CA, 2002.

\bibitem[GR01]{ISO-GUOREIN}
Y.~Guo and G.~Rein.
\newblock Isotropic steady states in galactic dynamics.
\newblock {\em Communications in Mathematical Physics}, 219:607–629, 2001.

\bibitem[Hua87]{Huang-Book}
K.~Huang.
\newblock {\em Statistical Mechanics}.
\newblock John Wiley \& Sons, New York, USA, 1987.

\bibitem[LL01]{lieb_loss_2001}
H.~Lieb, E. and M.~Loss.
\newblock {\em Analysis}.
\newblock American Math. Soc., Providence, RI, 2001.

\bibitem[MRVDB22]{doi:10.1063/5.0091016}
M.~Moreno, P.~Rioseco, and H.~Van Den~Bosch.
\newblock Mixing in anharmonic potential well.
\newblock {\em Journal of Mathematical Physics}, 63(7):071502, 2022.

\bibitem[P.84]{AIHPC_1984__1_2_109_0}
Lions P., L.
\newblock The concentration-compactness principle in the calculus of variations. {The} locally compact case, part 1.
\newblock {\em Annales de l'I.H.P. Analyse non lin\'eaire}, 1(2):109--145, 1984.

\bibitem[PA96]{perez1996stability}
Jerome Perez and Jean-Jacques Aly.
\newblock Stability of spherical stellar systems—i. analytical results.
\newblock {\em Monthly Notices of the Royal Astronomical Society}, 280(3):689--699, 1996.

\bibitem[Pfa92]{pfaffelVP}
K.~Pfaffelmoser.
\newblock Global classical solutions of the {V}lasov-{P}oisson system in three dimensions for general initial data.
\newblock {\em Journal of Differential Equations}, 95(2):281--303, 1992.

\bibitem[Pis88]{SPS_1988__22__485_0}
G.~Pisier.
\newblock Riesz transforms : a simpler analytic proof of {P.} {A.} {Meyer's} inequality.
\newblock {\em S\'eminaire de probabilit\'es de Strasbourg}, 22:485--501, 1988.

\bibitem[Rei94]{gR93}
G.~Rein.
\newblock Static solutions of the spherically symmetric {V}lasov-{E}instein system.
\newblock {\em Math. Proc. Cambridge Phil. Soc.}, 115:559--570, 1994.

\bibitem[Rei98]{StableSSDynamics}
Gerhard Rein.
\newblock Stable steady states in stellar dynamics.
\newblock {\em Archive for Rational Mechanics and Analysis}, 147, 06 1998.

\bibitem[Rei99a]{https://doi.org/10.1007/s002200050674}
G.~Rein.
\newblock Flat steady states in stellar dynamics–existence and stability.
\newblock {\em Communications in Mathematical Physics}, 205(4):229--247, 1999.

\bibitem[Rei99b]{Rei}
G.~Rein.
\newblock Static shells for the {V}lasov-{P}oisson and {V}lasov-{E}instein systems.
\newblock {\em Indiana Univ. Math. J.}, 48:335--346, 1999.

\bibitem[Rei00]{REIN2000313}
G.~Rein.
\newblock Stationary and static stellar dynamic models with axial symmetry.
\newblock {\em Nonlinear Analysis: Theory, Methods and Applications}, 41(3):313--344, 2000.

\bibitem[Rei01]{doi:10.1137/P0036141001389275}
G.~Rein.
\newblock Reduction and a concentration-compactness principle for energy-casimir functionals.
\newblock {\em SIAM Journal on Mathematical Analysis}, 33(4):896--912, 2001.

\bibitem[RR92]{gRaR92}
G.~Rein and A.D. Rendall.
\newblock Global existence of solutions of the spherically symmetric {V}lasov--{E}instein system with small initial data.
\newblock {\em Comm. Math. Phys.}, 150:561--583, 1992.

\bibitem[RS17]{pRoS16}
P.~Rioseco and O.~Sarbach.
\newblock Accretion of a relativistic, collisionless kinetic gas into a {S}chwarzschild black hole.
\newblock {\em Class. Quantum Grav.}, 34(9):095007, 2017.

\bibitem[RS18]{PhysRevD.98.124024}
P.~Rioseco and O.~Sarbach.
\newblock Phase space mixing in the equatorial plane of a kerr black hole.
\newblock {\em Phys. Rev. D}, 98:124024, Dec 2018.

\bibitem[RS20]{pRoS18b}
P.~Rioseco and O.~Sarbach.
\newblock Phase space mixing in external gravitational potentials.
\newblock {\em Classical and Quantum Gravity}, 37(19):195027, 2020.

\bibitem[Sch09]{schulze_2009}
A.~Schulze.
\newblock Existence and stability of static shells for the {V}lasov–{P}oisson system with a fixed central point mass.
\newblock {\em Mathematical Proceedings of the Cambridge Philosophical Society}, 146(2):489–511, 2009.

\end{thebibliography}
\nocite{*}

\end{document}